\theoremstyle{definition} 
\newtheorem{theorem}{Theorem}[section]
\newtheorem{proposition}[theorem]{Proposition}
\newtheorem{remark}[theorem]{Remark}
\newtheorem{example}[theorem]{Example}
\newtheorem{definition}[theorem]{Definition}
\newcommand{\ZZ}{\mathbb{Z}}
\newcommand{\NN}{\mathbb{N}}
\DeclareMathOperator{\Hom}{Hom}
\newcommand{\imono}[1]{ \;\xymatrix{  \ar@{>->}^{#1}[r] &  \\} }
\newcommand{\iepi}[1]{ \;\xymatrix{  \ar@{->>}^{#1}[r] &  \\} }
\newcommand{\mono}{ \;\xymatrix{  \ar@{>->}[r] &  \\} }
\newcommand{\epi}{ \xymatrix{   \ar@{->>}[r] &  \\} }
\title{On the boundary algebras of the Jacobian algebras of the bordered marked surfaces}
\author{Ndongo Diouf \footnote{Postdoctoral student at Université de Sherbrooke under the supervision of Professor Thomas Brüstle}}
\begin{document}

\maketitle

\begin{abstract}
Given $\sigma$ a triangulation of bordered surface with marked points and punctures $(S, M)$, we associate an ice quiver with potential $(Q_{\sigma}, W_{\sigma}, F)$ and define the corresponding Jacobian algebra $\Gamma_{\sigma}$. We show that the boundary algebra $B(\sigma)$ of $\Gamma_{\sigma}$ depends only on the surface $(S, M)$.
\end{abstract}

\section*{Introduction}
Let $\sigma$ be a triangulation of a bordered surface with marked points $(S, M)$ without puncture. Cluster algebras $\mathcal{A}(\sigma)$ associated to the quiver $Q_{\sigma}$ were defined in \cite{CCS06}, \cite{FST08} and their quivers with potentials $(Q_{\sigma}, W_{\sigma})$ in \cite{ABC10}, \cite{Lab09}. The Jacobian algebras $J(Q_{\sigma}, W_{\sigma})$ were introduced in \cite{DWZ08}, \cite{Lab09}. The cluster category $ \mathcal{C}_{\sigma} = \mathcal{C}_{(Q_{\sigma}, W_{\sigma})}$ associated to $(Q_{\sigma}, W_{\sigma})$ is defined in \cite{Ami09} as the quotient of triangulated categories $\mathrm{Per}\Gamma /D^{b}\Gamma$, where $\Gamma = \Lambda_{(Q_{\sigma}, W_{\sigma})}$ is the corresponding Ginzburg dg-algebra \cite{Gin06} associated to $J(Q_{\sigma}, W_{\sigma})$, $\mathrm{Per}\Gamma$ is the thick subcategory of $D\Gamma$ generated by $\Gamma$ and $D^{b}\Gamma$ is the full subcategory of $D\Gamma$ of the dg-modules whose homology is of finite total dimension. 

For each bordered surface with marked points and punctures $(S, M)$, let $F$ be the set of all boundary arcs delimited by marked points. The elements of $F$ are called frozen vertices. Let  $ e = \sum_{i\in F} e_{i} $ be the sum of all primitives idempotents corresponding to the frozen vertices. We define the quotient algebra $\mathcal{P}(\sigma) = \Gamma_{\sigma} / \langle e \rangle $, where $\Gamma_{\sigma} = J(Q_{\sigma}, W_{\sigma}, F)$. The algebra $\mathcal{P}(\sigma)$ is jacobi-finite if $e \neq 0$. Let $\mathcal{G}(\mathcal{P}(\sigma)$ be the corresponding Ginzburg dg-algebra and denoted $B(\sigma)$ the boundary algebra of $\Gamma_{\sigma}$. 

Let $(R, \mathrm{m})$ be a Gorenstein local ring, that  is a commutative Noetherian local ring with finite injective dimension as an R-module. A finite $R$-module M is Cohen-Macaulay if $ \mathrm{Ext}_{A}^{i}(M, R) = 0, \:\mbox{for all $i > 0$}$. The stable category $\underline{\mathrm{CM}}(B(\sigma))$ of Cohen-Macaulay modules over $B(\sigma)$ was studied by Demonet and Luo in the case of the $(n+3)$-gon without and with one puncture $(P_{n+3,p})$, for $p \in \lbrace 0,1 \rbrace$. It is shown in \cite{DL1}, \cite{DL2} that $B(\sigma)$ is Gorenstein and is independent of the choice of the triangulation $\sigma$ of $(P_{n+3,p})$, for $p \in \lbrace 0,1 \rbrace$.
We have the following result
\begin{theorem}\cite{DL1} \cite{JKS16}\cite{DL2}\cite{BKM16}
Let  $\sigma$ be a triangulation associated to $(S, M)$. If $(S, M)= (P_{n+3,p})$, for $p\in \lbrace 0, 1 \rbrace$, then there is a triangle equivalence 
\begin{equation}
     \mathcal{C}(\mathcal{G}(\mathcal{P}(\sigma))) \cong \underline{\mathrm{CM}}(B(\sigma)) 
\end{equation} 
between the cluster category in type $\mathbb{A}_{n-3}$ (respectively in type $\mathbb{D}_{n+3}$) and the stable category of $B(\sigma)$-modules of Cohen-Macaulay \cite{Aus78}, \cite{CR90}, \cite{Sim92}, \cite{Yos90}.
\end{theorem}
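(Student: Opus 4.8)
The two sides of the claimed equivalence are each Hom-finite triangulated $2$-Calabi--Yau categories carrying a cluster structure, so the natural strategy is to recognise both as the same generalized cluster category and then conclude by transitivity. Concretely, the plan is to invoke Amiot's construction on the left, Buchweitz's singularity theory together with the Gorenstein property on the right, and finally the Keller--Reiten recognition theorem to identify the two. First I would record that $\mathcal{P}(\sigma) = \Gamma_{\sigma}/\langle e\rangle$ is, up to completion, the Jacobian algebra of the non-frozen part of $(Q_{\sigma}, W_{\sigma})$, which for the disk with $n+3$ marked points is a cluster-tilted algebra of type $\mathbb{A}$ and for the once-punctured disk is of type $\mathbb{D}$. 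Since $e \neq 0$ this algebra is Jacobi-finite, so by \cite{Ami09} the category $\mathcal{C}(\mathcal{G}(\mathcal{P}(\sigma)))$ is a Hom-finite $2$-Calabi--Yau triangulated category equipped with a canonical cluster-tilting object $T$ satisfying $\End(T) \cong \mathcal{P}(\sigma)$; in Dynkin type this generalized cluster category coincides with the classical cluster category of the corresponding type.

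Next I would analyse the right-hand side. By \cite{DL1} for $p=0$ and \cite{DL2} for $p=1$, the boundary algebra $B(\sigma) = e\Gamma_{\sigma}e$ is Iwanaga--Gorenstein and, crucially, independent of the chosen triangulation $\sigma$; this is exactly the input that lets us treat $B(\sigma)$ as an invariant of $(S,M)$. Granting the Gorenstein property, Buchweitz's theorem identifies $\underline{\mathrm{CM}}(B(\sigma))$ with the singularity category, which is Hom-finite and, by the explicit injective-resolution and syzygy computations of \cite{DL1}, \cite{DL2}, is $2$-Calabi--Yau. The essential point, supplied by \cite{JKS16} in the Grassmannian $\mathrm{Gr}(2,n)$ picture and by \cite{BKM16} through its dimer-model reformulation, is that $\underline{\mathrm{CM}}(B(\sigma))$ admits a cluster-tilting object $T'$ whose stable endomorphism algebra is again the Jacobian algebra $\mathcal{P}(\sigma)$ of the surface. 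I would verify this identification by matching the rank-one Cohen--Macaulay $B(\sigma)$-modules with the arcs of $\sigma$ and the $\Hom$-spaces with the arrows and relations of $Q_{\sigma}$, as in the $\mathrm{Gr}(2,n)$ computation, noting that passage to the stable category kills the projective-injectives coming from the frozen vertices and hence replaces $\End_{B(\sigma)}(T')$ by its non-frozen quotient $\mathcal{P}(\sigma)$.

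Having realised both categories as algebraic, Hom-finite, $2$-Calabi--Yau triangulated categories each containing a cluster-tilting object with the same endomorphism algebra $\mathcal{P}(\sigma)$, I would finish by appealing to the recognition theorem of Keller--Reiten in the form used in \cite{Ami09}: an algebraic $2$-Calabi--Yau category with a cluster-tilting object whose Gabriel quiver has no loops or $2$-cycles is triangle-equivalent to the generalized cluster category $\mathcal{C}(\mathcal{G}(\mathcal{P}(\sigma)))$. Applying this to $\underline{\mathrm{CM}}(B(\sigma))$ with the object $T'$ yields the desired triangle equivalence, and in Dynkin type the generalized cluster category is the classical one, producing type $\mathbb{A}$ (respectively type $\mathbb{D}$) as asserted.

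The step I expect to be the main obstacle is this last identification. The recognition theorem requires a compatible dg-enhancement of $\underline{\mathrm{CM}}(B(\sigma))$ and a verification that the comparison functor is a genuine triangle functor, not merely an additive equivalence preserving the cluster-tilting object; reconciling the dg-structure of the Ginzburg algebra $\mathcal{G}(\mathcal{P}(\sigma))$ with the Buchweitz model of the singularity category is the delicate point. Establishing the Gorenstein property and the $2$-Calabi--Yau structure of $\underline{\mathrm{CM}}(B(\sigma))$ in the punctured case (type $\mathbb{D}$) is also heavier than in the polygon case, since the syzygy combinatorics of $B(\sigma)$ is more involved there; this is where I would lean most on \cite{DL2}.
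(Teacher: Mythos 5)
The paper does not actually prove this theorem: it appears in the introduction as quoted background, attributed entirely to \cite{DL1}, \cite{JKS16}, \cite{DL2}, \cite{BKM16}, so there is no internal proof to compare yours against. That said, your outline is a faithful reconstruction of the strategy used in those references: Amiot's construction realises the left-hand side as a Hom-finite $2$-Calabi--Yau category with cluster-tilting object of endomorphism algebra $\mathcal{P}(\sigma)$; the Gorenstein property of $B(\sigma)$ together with Buchweitz's theorem equips $\underline{\mathrm{CM}}(B(\sigma))$ with the required triangulated and $2$-Calabi--Yau structure; the rank-one Cohen--Macaulay modules indexed by arcs supply a cluster-tilting object whose stable endomorphism algebra is again $\mathcal{P}(\sigma)$; and a recognition theorem closes the argument. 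One substantive caution: the recognition theorem you invoke in the last step is stronger than what exists. The Keller--Reiten theorem requires the endomorphism algebra of the cluster-tilting object to be hereditary (equivalently, its quiver acyclic); an algebraic $2$-Calabi--Yau category with a cluster-tilting object whose quiver merely has no loops or $2$-cycles need not be a generalized cluster category. In the two cases at hand this is harmless, because one may choose a fan triangulation of the $(n+3)$-gon (respectively a suitable triangulation of the once-punctured disk) whose quiver is an acyclic orientation of type $\mathbb{A}$ (respectively $\mathbb{D}$), which is exactly how \cite{JKS16} and \cite{DL2} proceed --- but as written your final step over-claims, and you should route the argument through an acyclic triangulation before applying Keller--Reiten. (Note also that the statement's ``type $\mathbb{A}_{n-3}$'' is presumably a typo for $\mathbb{A}_{n}$, the type associated to the unpunctured $(n+3)$-gon.)
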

It was shown by Keller-Yang in \cite{KY11} and Labardini-Fragoso in \cite{Lab09} that for any bordered surface with marked points, the cluster category $\mathcal{C}_{\sigma} = \mathcal{C}(\mathcal{G}(\mathcal{P}(\sigma)))$ depends only of the surface $(S, M)$, that is $\mathcal{C}_{\sigma}=  \mathcal{C}_{(S, M)}$. 

Our main result is the following theorem
\begin{theorem} Let  $\sigma$ be a triangulation associated to $(S, M)$. The boundary algebra $B(\sigma)$  of the frozen Jacobian algebra $\Gamma_{\sigma}$ associated to $(S,M)$ is independent of the choice of $\sigma$, up to isomorphism. 
\label{resultat principal}
\end{theorem}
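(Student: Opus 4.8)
The plan is to reduce the assertion to invariance under a single flip and then to transport the problem to mutation of ice quivers with potentials. Recall that the boundary algebra is $B(\sigma)=e\Gamma_\sigma e$ with $e=\sum_{i\in F}e_i$ the idempotent at the frozen (boundary) vertices; note that the invariance of the cluster category recorded above concerns the complementary quotient $\mathcal{P}(\sigma)=\Gamma_\sigma/\langle e\rangle$, whereas here we must instead follow the frozen corner $e\Gamma_\sigma e$. Since any two triangulations of $(S,M)$ are connected by a finite sequence of flips at internal arcs \cite{FST08}, and a boundary arc is never flipped, it suffices to prove
\[ e\,\Gamma_\sigma\, e \;\cong\; e\,\Gamma_{\sigma'}\, e \qquad\text{when } \sigma'=\mu_k\sigma \text{ is a flip of } \sigma \text{ at an internal arc } k. \]

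First I would record the flip--mutation compatibility at the level of ice quivers with potentials. By Labardini-Fragoso \cite{Lab09} the quiver with potential $(Q_{\sigma'},W_{\sigma'})$ is right-equivalent to the mutation $\mu_k(Q_\sigma,W_\sigma)$; since mutation is a local operation around $k$, the same computation carries over when the boundary segments are adjoined as the frozen vertices $F$. As $k$ is mutable, that is $k\notin F$, the set $F$ and the idempotent $e$ are fixed by $\mu_k$, so that $(Q_{\sigma'},W_{\sigma'},F)=\mu_k(Q_\sigma,W_\sigma,F)$ as ice quivers with potential and $\Gamma_{\sigma'}=J(Q_{\sigma'},W_{\sigma'},F)$.

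The technical heart is to show that mutation of an ice quiver with potential at a mutable vertex preserves the boundary algebra. For this I would pass to the relative Ginzburg dg-algebras $\mathcal{G}_\sigma$ and $\mathcal{G}_{\sigma'}$, with $H^0(\mathcal{G}_\sigma)=\Gamma_\sigma$ and $H^0(\mathcal{G}_{\sigma'})=\Gamma_{\sigma'}$, and exploit the derived equivalence induced by mutation in the style of Keller-Yang \cite{KY11}. Writing $P_i=e_i\mathcal{G}_\sigma$ for the indecomposable projectives, this equivalence $\Phi_k\colon\mathcal{D}(\mathcal{G}_{\sigma'})\xrightarrow{\sim}\mathcal{D}(\mathcal{G}_\sigma)$ is realized by a tilting complex that sends $e_i\mathcal{G}_{\sigma'}\mapsto e_i\mathcal{G}_\sigma$ for every $i\neq k$ and only alters the object attached to $k$. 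Since all frozen vertices satisfy $i\neq k$, we get $\Phi_k\bigl(e\mathcal{G}_{\sigma'}\bigr)=e\mathcal{G}_\sigma$, and applying $\Hom$ in the derived category yields
\[ e\,\Gamma_{\sigma'}\, e=\Hom_{\mathcal{D}(\mathcal{G}_{\sigma'})}\bigl(e\mathcal{G}_{\sigma'},e\mathcal{G}_{\sigma'}\bigr)\;\cong\;\Hom_{\mathcal{D}(\mathcal{G}_\sigma)}\bigl(e\mathcal{G}_\sigma,e\mathcal{G}_\sigma\bigr)=e\,\Gamma_\sigma\, e. \]
Equivalently, this is the invariance of the boundary algebra under mutation of frozen Jacobian algebras established by Pressland.

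The main obstacle is this last step, and two points within it demand care. First, the flip--mutation dictionary must be genuinely upgraded to the frozen setting: one has to verify that adjoining the boundary segments as frozen vertices does not disturb the local mutation rule at $k$, and that non-degeneracy of the potential persists so that $\mu_k$ stays defined along an entire sequence of flips. Second, when punctures are present the combinatorics of flips is more delicate -- self-folded triangles and their associated potentials must be treated through the refined correspondence of Labardini-Fragoso -- so that the identification of flips with mutations, and hence the chain of isomorphisms, remains valid. Granting these, composing the isomorphisms along any sequence of flips joining two triangulations gives $B(\sigma)\cong B(\sigma')$, proving independence of $\sigma$.
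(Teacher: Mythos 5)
Your overall architecture agrees with the paper's only up to the reduction step: both arguments reduce to a single flip $\sigma'=\mu_k\sigma$ at an internal arc, using that any two triangulations are flip-connected and that frozen vertices are never mutated. After that the two proofs genuinely diverge. The paper works entirely at the level of presentations: it forms the path algebra $A=(1-e_k)KQ_\sigma(1-e_k)$ obtained by deleting $k$ and adjoining arrows $x_1,\overline{x_2},x_3,\overline{x_4}$ for the length-two paths through $k$, lists the eight commutativity relations that the cyclic derivatives of $W_{\sigma,k}$ induce on $A$, repeats the computation for $A'=(1-e_{k'})KQ_{\sigma'}(1-e_{k'})$ using the mutated potential $[W_\sigma]+\Delta_k$, and writes down an explicit isomorphism $\varphi\colon A'\to A$ matching relation to relation; passing to the corner $e(-)e$ then gives $B(\sigma)\cong B(\sigma')$. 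You instead propose to deduce the isomorphism from a Keller--Yang-type derived equivalence of relative Ginzburg dg-algebras that fixes $e_i\mathcal{G}$ for all $i\neq k$. That route is more conceptual and, if available, immediately yields invariance under arbitrary mutations of ice quivers with potential; but note that the step you outsource is exactly what the paper's hands-on computation supplies. The classical Keller--Yang theorem concerns the ordinary Ginzburg dg-algebra, whose $H^0$ is the ordinary Jacobian algebra $J(Q_\sigma,W_\sigma)$ and not the frozen one $\Gamma_\sigma=J(Q_\sigma,W_\sigma,F)$ (the frozen ideal omits the derivatives along arrows joining two frozen vertices), so you genuinely need the relative version or Pressland's mutation theorem for frozen Jacobian algebras, together with a verification of its hypotheses for surface ice QPs and of the flip-equals-mutation statement in the tagged/punctured setting. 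You flag these correctly as the obstacles; treated as citable inputs your argument closes, but it is then a different and strictly more machinery-dependent proof than the paper's elementary, explicit one, whose payoff is self-containedness at the price of a case-by-case check of relations.
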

In the first section of this paper, we recall some background about quiver with potential and their mutations. We show our main result in the second section and in the last section we compute some examples of $B(\sigma)$ in the cases of disk, annulus and torus. 

\section{Quiver with potential and mutations}
Let $Q = (Q_{0}, Q_{1}, s, t)$ be a finite connected quiver without loops, with set of vertices $ Q_{0} = \lbrace 1, 2, ..., n \rbrace $ and set of arrows $Q_{1}$ and $s, t$ the functions that map each arrow $\alpha$ to its starting $s(\alpha)$ and ending vertex $t(\alpha)$. We denote by $ KQ_{i} $ the $K$-vector space with basis $Q_{i}$ consisting of paths of length $i$ in $Q$, by $K Q_{i, cycl}$
the subspace of $ KQ_{i} $ spanned by all cycles in $ KQ_{i} $ and by $K\langle\langle Q\rangle\rangle = \prod_{i\geq 0}KQ_{i}$ the complete path algebra of the path algebra $ K\langle Q\rangle = \bigoplus_{i\geq 0}KQ_{i} $. An element $W$ in $ \prod_{i\geq 1}KQ_{i, cycl}$ is called potential. Two potentials $W$ and $W'$ are called cyclically equivalent if the difference $ W - W' $ is in the closure of the space generated by all differences $a_{1}...a_{n-1}a_{n} - a_{2}...a_{n}a_{1}$.  A quiver with potential is a pair $(Q,W)$ consisting of a quiver $Q$ without loops and a potential which does not have two cyclically equivalent terms. For each arrows $\alpha \in Q_{1}$, the cyclic derivative $\partial_{\alpha}$ is the continuous $K$-linear map $\xymatrix{\prod_{l\geq 1}KQ_{i, cycl} \ar[r] & K\langle\langle Q\rangle\rangle} $ acting on the cycles by
\begin{equation*}
    \partial_{\alpha}(\alpha_{1}\alpha_{2}\ldots\alpha_{k}) = \sum _{\alpha = \alpha_{i}}\alpha_{i+1}\ldots\alpha_{k}\alpha_{1}\ldots\alpha_{i-1}
\end{equation*}
The potential encodes relations on $Q$. We denoted $ \mathcal{J}(W) = \langle\langle\partial_{a}W \: \vert \:  a \in Q_{1}\rangle\rangle $ the closure of the bilateral ideal $\langle \partial_{a}W \rangle$ generated by this relations.
\begin{definition}\cite{BIRS}
An \textit{ice quiver with potential} is a triple $ (Q, W, F) $, where $ (Q, W) $ is a quiver with
potential and $ F$ is a subset of $Q_{0} $. The elements of $ F$ are called \textit{frozen vertices}. The \textit{frozen Jacobian algebra} $\Gamma$ is the quotient
\[ \Gamma = K\langle\langle Q \rangle\rangle /\mathcal{J}(W), \] where  
\[ \mathcal{J}(W, F) = \langle\langle \partial_{\alpha}W \: \vert \: \alpha \in Q_{1} : s(\alpha)\notin F \hspace{0.17cm} or \hspace{0.17cm} b(\alpha)\notin F \rangle\rangle.\] is the ideal of $K\langle\langle Q\rangle\rangle$.
\end{definition}
For example, let $F= \lbrace 2, 3 \rbrace $ and $Q$ the quiver in the Figure~\ref{carquois}.
\begin{figure}[hbt!]
    \centering
    \xymatrix{&&&&& 1\ar[rr]^{a}  & & 2\ar[dd]^{b} \\
\\
		  &&&&& 4\ar[uu]^{d} & & 3\ar[ll]^{c}}
    \caption{Quiver with potential $W = abcd$ }
    \label{carquois}
\end{figure}
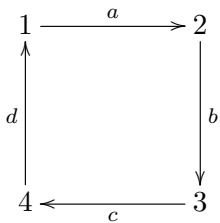
Then the Jacobian ideal is $\mathcal{J}(W) = \langle \langle bcd, dab, abc \rangle \rangle $.

Let $k \in Q_{0}$. Asume that the quiver $Q$ does not have to $2$-cycles at $k$ and no cyclic path  occurring in the expansion of $W$ starts and ends at $k$. Let $\mathrm{m}$ be the ideal generated by the arrows of $Q$. A quiver with potential is called \textit{reduced} if $\partial_{\alpha}W$ is contained in $\mathrm{m}^{2}$ for all arrow $\alpha \in Q$.

\begin{definition}\cite{DWZ08}
The \textit{mutation} of a quiver with potential $(Q, W)$ at $k$ is the new reduced quiver with potential $(\overline{Q}, \overline{W})$ = $(\tilde{Q}_{red}, \tilde{W}_{red})$ defined as follows. 
\begin{enumerate}
\item The quiver $\tilde{Q}$ is obtained from $Q$ by
\item[-] For each path $\xymatrix{i \ar[r]^\alpha & k \ar[r]^\beta & j }$, add a new arrow $\xymatrix{i \ar[r]^{[\alpha\beta]} & j}$.
\item[-] Replace each arrow $ \alpha $ with source or target $k$ with an arrow $\alpha^{\ast}$ oriented in the opposite direction.   
\item The potential $\tilde{W}$ is the sum of the two potentials $[W]$ and $\Delta_{k}$. 
\item[-] The potential $[W]$ is obtained from $W$ by replacing each composition $\alpha \beta $ by $[\alpha \beta]$, where $\alpha$ and $\beta$ are an arrows with target and source $k$ respectively. 
\item[-] The potential $\Delta_{k}$ is given by $\Delta_{k} = \displaystyle\sum_{s(\beta) = t(\alpha)=k}[\alpha\beta]\beta^{\ast}\alpha^{\ast}$
\end{enumerate}
\end{definition}
\begin{example}
The mutation at $3$ of the quiver in the Figure~\ref{carquois} corresponds to the quiver with potential $(\overline{Q}, \overline{W})$  
$$\xymatrix{1\ar[rr]^{a} & & 2\ar[ddll]^{[bc]} \\
\\
4\ar[uu]^{d}\ar[rr]_{c^{\ast}} & & 3\ar[uu]_{b\ast}} $$ $$ \overline{W} = a[bc]d + [bc]c^{\ast}b^{\ast} $$ The vertex $2$ is not eligible for mutation in $(\overline{Q}, \overline{W})$. To do this, we replace $\overline{W}$ with a cyclically equivalent potential, for example $W' = a[bc]d + c^{\ast}b^{\ast}[bc]$.  
\end{example}

\section{Jacobian algebras for triangulated surfaces}
\subsection{Marked surfaces}
Let $S$ be a connected oriented $2$-dimensional Riemann surface with boundary $\partial S$ and $M$ a finite set of \textit{marked points} in the closure of $S$ with at least one marked point on each connected component of $\partial S$. Marked points in the interior of $S$ are called \textit{punctures}. For technical reason, 
A \textit{bordered surface with marked points} is an object $(S, M)$ that is none of a sphere with $1$, a monogon with $0$ or $1$ puncture; or a digon or triangle without punctures. Up to homeomorphism, $(S, M)$ is defined by the genus $g$ of $S$, the number $b$ of boundary components, the number $c$ of marked points on the boundary and the number $p$ of punctures. Examples of a disk with $b =1$, $c = 4$ and $p = 2$, anulus with $b = 2$, $c = 5$ and $p =1$ and tore with $g =1$, $b=1$ and $c= 1$ are given in Figure \ref{exemples_de_surfaces}. 

\begin{figure}[hbt!]
    \centering
    \begin{minipage}[t]{.46\linewidth}
     \centering
    \includegraphics[ width = 6.5cm]{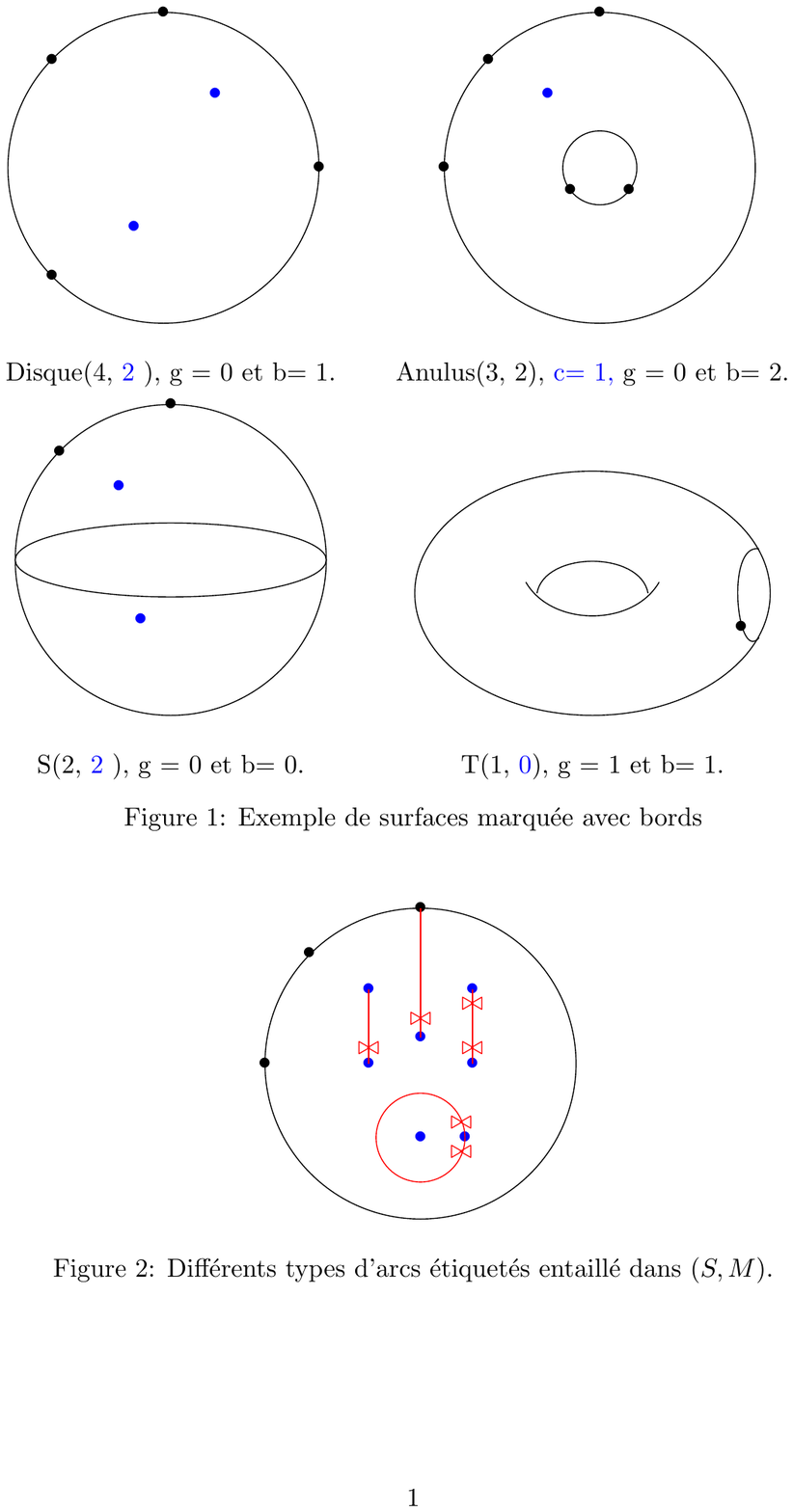}
\end{minipage}
\begin{minipage}[t]{.4\linewidth}
  \centering 
  \includegraphics[ width = 3.56cm]{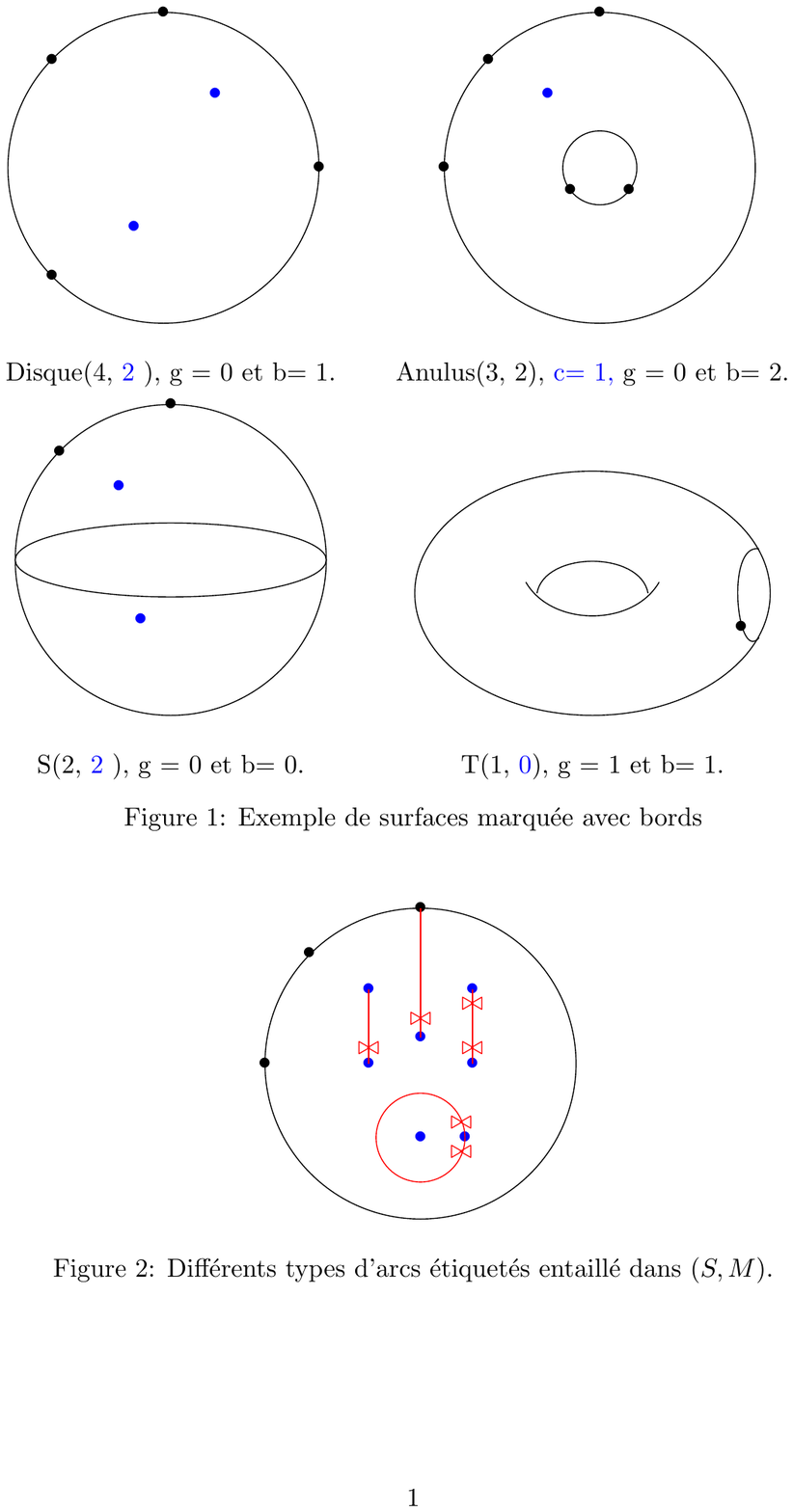}
  \end{minipage}
    \caption{Disk $D(4,2)$, anulus $A(3,2)$ and torus $T(1,0)$.}
    \label{exemples_de_surfaces}
\end{figure}
\subsection{Arcs and triangulations}
\begin{definition}\cite{FST08}
A \textit{simple arc} in $(S,M)$ is a curve in $ S $, that is the image  $\delta([0, 1])$ of continuous function $ \delta : [0, 1] \rightarrow $ S $ $ that 
\begin{itemize}
\item[$(1)$] the endpoints $\delta(0)$ and $ \delta(1) $ of $ \delta $ are in $M$; 
\item[$(2)$] except for $\delta(0)$ and $ \delta(1) $, $ \delta $ is disjoint from $M$ and from $\partial S$;
\item[$(3)$] $ \delta $ does not cross itself, except that its endpoints may coincide;
\item[$(4)$] $ \delta $ does not cut out an unpunctured monogon or an unpunctured digon
  \end{itemize}
   \label{arc simple}
The set of all simple arcs is denoted by $ A^{\circ}(S,M) $. Each arc $y \in A^{\circ}(S,M) $ is considered up to isotopy inside the class of such curves. The isotopy class of $y$ is denoted by $I(y)$. The figure \ref{exemple d'arc et non arc} showed different types of simple arcs in $(S, M)$ in the right and curves no arcs in the left.
\begin{figure}[hbt!]
     \centering
     \includegraphics[width = 7cm]{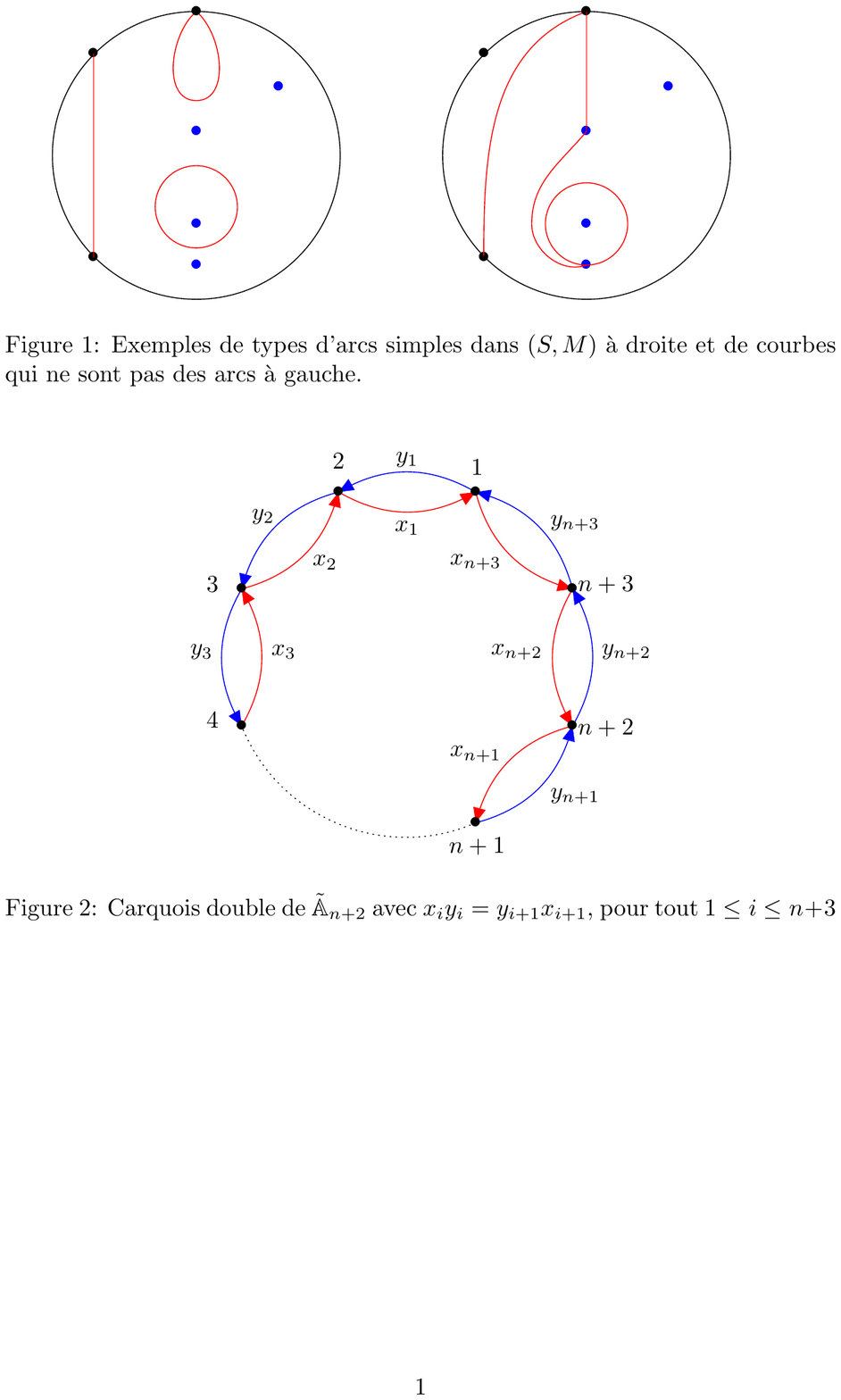}
     \caption{Simples arcs in right and curves no arcs in left.}
 \label{exemple d'arc et non arc}
 \end{figure}
\end{definition}
\begin{figure}[hbt!]
    \centering
    \includegraphics[width= 5cm]{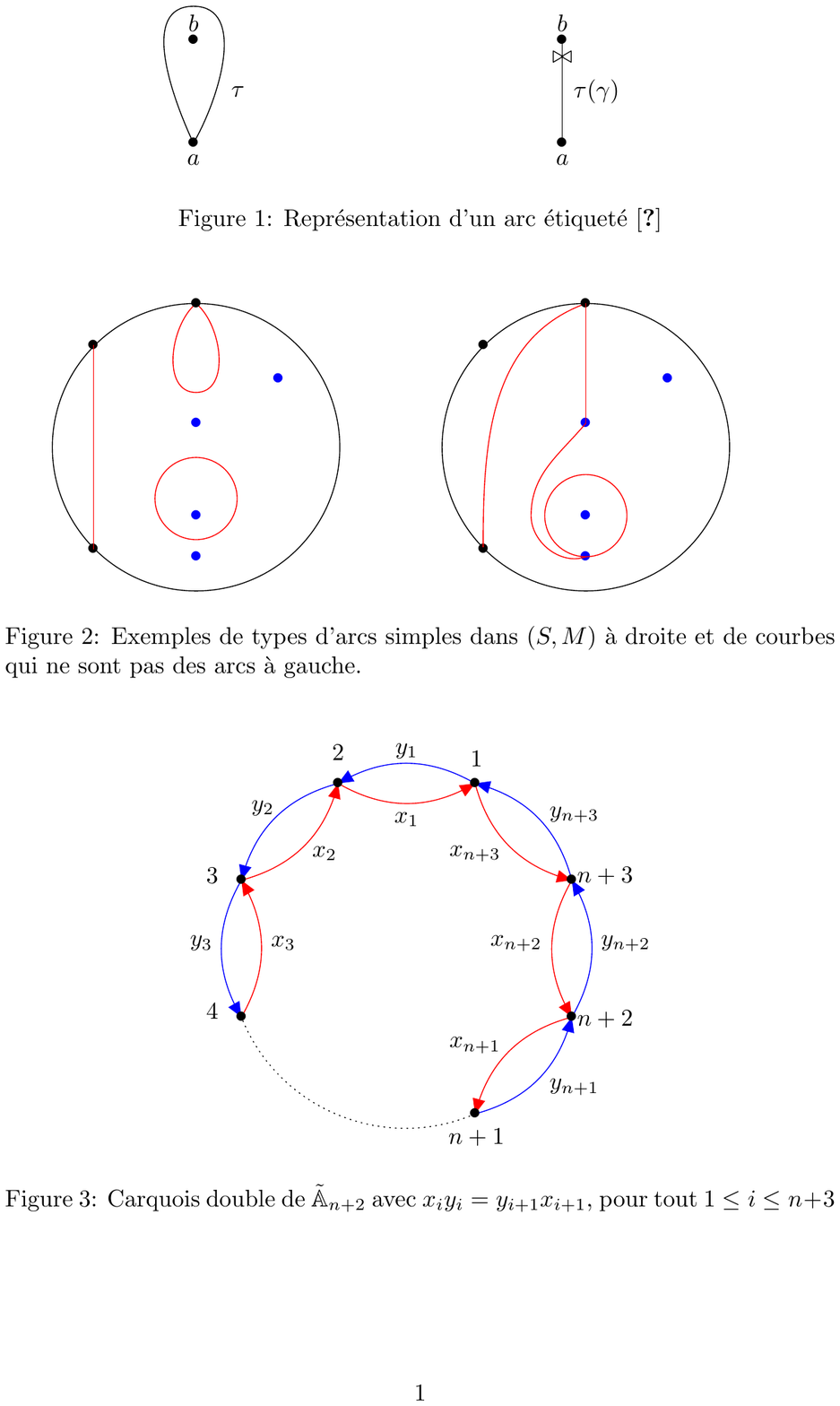}
    \caption{Representation of tagged arc} 
    \label{Represent of tagget arc}
\end{figure}
\begin{definition}\cite{FST08}
A \textit{tagged arc} $\gamma$ in $(S,M)$ is an arc in which each end has been tagged in one of two ways,
plain or notched satisfying the following conditions:
\begin{enumerate}
    \item the arc does not cut out a once-punctured monogon;
    \item an endpoint lying on the boundary $\partial_{S}$ is tagged plain; and
    \item both ends of a loop are tagged in the same way.
\end{enumerate}
Example of notched tags is showed in the figure \ref{Represent of tagget arc}. The figure \ref{Diff typ of notched targgets arcs in $(S, M)$} shows different types of notched targgets arcs in $(S, M)$.
\end{definition}
\begin{figure}[hbt!]
    \centering
    \includegraphics[width= 3.4cm]{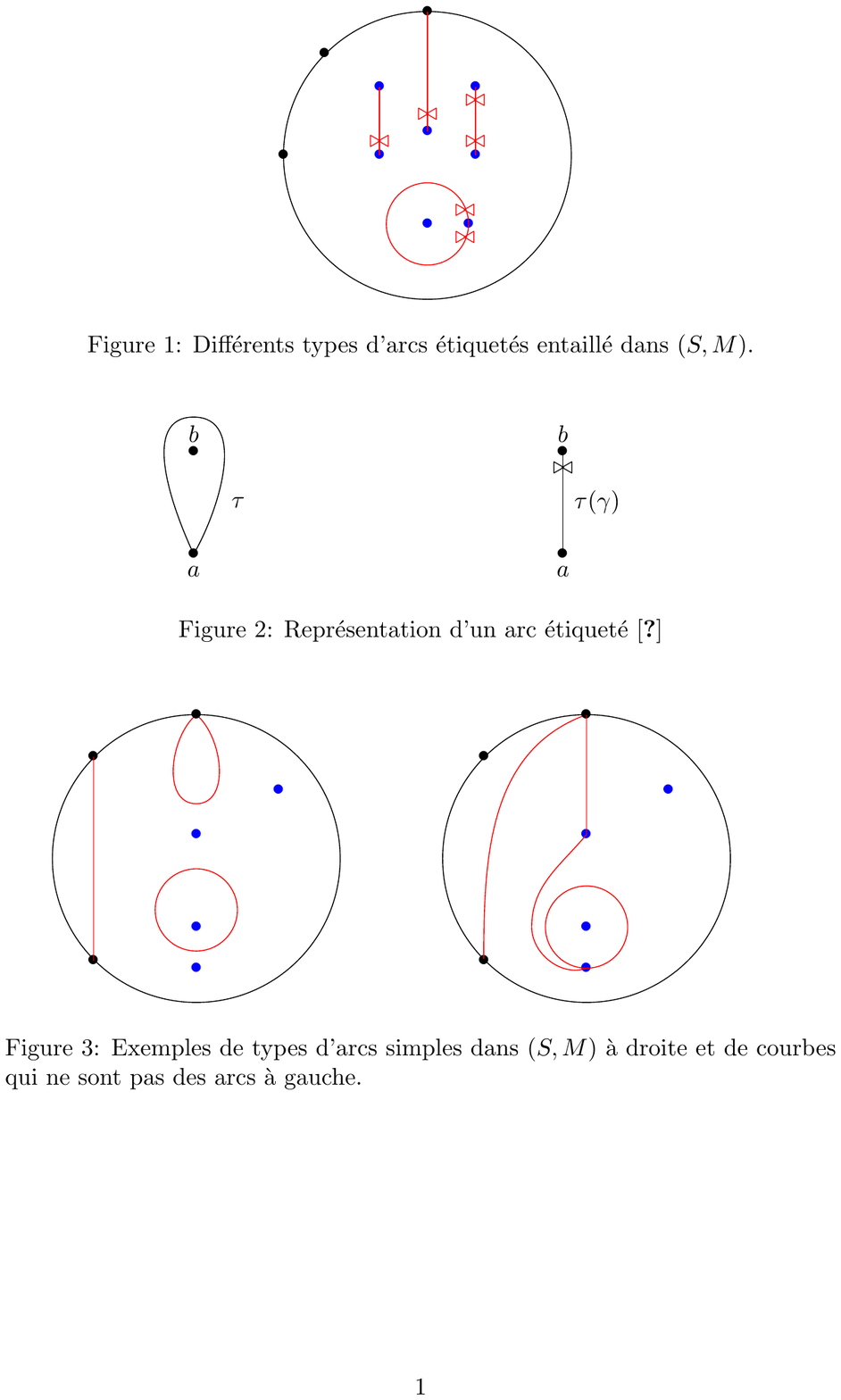}
   \caption{Notched targgets arcs in $(S, M)$.}
   \label{Diff typ of notched targgets arcs in $(S, M)$}
\end{figure} 
For any two arcs $ \gamma $, $ \rho $ in the set of tagged arcs $A^{\Join}(S,M)$, let $y$ et $p$ in $ A^{\circ}(S,M) $ the corresponding simple arcs, respectively. We define 
\begin{equation*}
    \#(y, p) = min\lbrace \text{number of crossings of $y'$ and $p'$ } \, | \,\text{ $y' \in \mathcal{I}(y)$ and $p' \in \mathcal{I}(p)$} \rbrace,
\end{equation*}
The arcs $y$ and $p$ are called \textit{compatible} if $ \#(y, p) = 0 $ in $int(S)\setminus M $.

\begin{definition}\cite{FST08}
Two tagged arcs $ \gamma $ and $ \rho $ in $A^{\Join}(S,M)$ are called \textit{compatible}  if and only if the following conditions are satisfied:
\begin{enumerate}
\item $\#(y, p) = 0$ in $int(S)\setminus M $;
\item If $\mathcal{I}(y) = \mathcal{I}(p)$, then at least one end of $ \gamma $ must
be tagged in the same way as the corresponding end of $ \rho $;
\item If $\mathcal{I}(y) \neq \mathcal{I}(p)$ and $y$ and $p$ share an endpoint $a$, then the ends of $ \gamma $ and $ \rho $ connecting to $a$ must be tagged in the same way.
\end{enumerate}
Otherwise, $ \gamma $ and $ \rho $ are called \textit{incompatible}.
\end{definition}

\begin{definition}
A maximal collection $C$ of pairwise compatible tagged arcs in $(S,M)$
is called a \textit{tagged triangulation} $T$. The collection $C$ is maximal when all arcs $ \mu $ in $ A^{\Join}(S,M)\setminus \textit{C} $ are incompatible with at east one arc in $ \textit{C}$.
\end{definition}
Every triangulation has exactly $n$ arcs given by \cite[Proposition 2.10.]{FST08} 
\begin{equation*}
    n =  6g + 3b + 3p + c - 6
\end{equation*}
The arcs of $T$ cut $(S,M)$ into a union of tagged triangles $\Delta$. Each pair of triangulations are related by a sequence of flips. Each flip at $k\in T$ replaces $k$ by a unique new arc $k'$ not isotopic such that $T' = (T \setminus \lbrace k \rbrace) \bigcup \lbrace k' \rbrace$ is a triangulation.

\subsection{Ice quivers with potential associated to (S, M)}
Let $\sigma$ be a triangulation of $(S,M)$ and $ 1\leq i\leq {\mid M\cap\partial{S}\mid} $. We number by $P_{i}$, with respect to anti-clockwise orientation, the marked points lying on one boundary component of $\partial S$. For any $ 2\leq i\leq {\mid M\cap \partial_{S}\mid} $, we index by $i$ the section of boundary $(P_{i-1}, P_{i})$ and by $1$ the section $ (P_{\mid M\cap \partial{S}\mid}, P_{1}) $. We assume that $S$ has one boundary component. Otherwise, we number the another component as above. 
We call external edges $i = (P_{i-1}, P_{i})$ and internal edges the tagged arcs $\gamma \in \sigma $. We associate to $\sigma$ the quiver with potential $ (Q'_{\sigma}, W'_{\sigma}) $ as in \cite{Lab09} \cite{DL1}
\begin{enumerate}
    \item The set of vertices $ (Q'_{\sigma})_{0} = \lbrace i \rbrace \bigcup \lbrace \gamma \in \sigma \rbrace $; 
     \item If two edges $ i$ and $j$ are sides of a common triangle $ \Delta $ of $\sigma$, there exist an \textit{internal arrow} $\xymatrix{ i \ar[r]^{\alpha_{i, j}} & j }$ in $ \Delta $ if $i$ is a predecessor of $j$ with respect to anti-clockwise orientation centered at the joint vertex.
    \end{enumerate}
We will consider below two type of cycles $\Delta_{i}$ and $\Delta_{p_{k}}$. A \textit{minimal cycle} of $ Q'_{\sigma} $ is a cycle in which no arrow appears more than once, and which encloses a part of the plane whose interior is connected and doesn't contain any arrow of $ Q'_{\sigma} $. For example, in Figure \ref{exemp quiver of P(7, 0)} $v_{3}u_{2}a_{2}$ and $u_{2}v_{2}y_{2}$ are minimal cycles and in Figure \ref{exemp quiver of P(4, 1)} $v_{1}u_{4}u_{4}a_{4}a_{1}a_{2}a_{3}a_{4}$ and $u_{4}v_{4}u_{3}v_{3}u_{2}v_{2}u_{1}v_{1}$ are not. The potential is
$$  W'_{\sigma} = \sum \Delta_{i} - \sum \Delta_{p_{k}}, \mbox{where}$$
\begin{enumerate}
    \item $\Delta_{i}$ is a \textit{clockwise cyclic triangle} consisting of three internal arrows inside of a triangle of $\sigma$ or (in the case of one-punctured digon) around one puncture;
     \item $ \Delta_{p_{k}} $ is a \textit{anti-clockwise orientation cycle}, joining tagged arcs incident to one puncture. 
\end{enumerate}
Let $ F = \lbrace i = (P_{i-1}, P_{i}) \; \vert \; 1\leq i\leq {\mid M\cap \partial_{S}\mid} \rbrace $ be the set of all external edges. 

\begin{definition}
The ice quiver with potential $ (Q_{\sigma}, W_{\sigma}, F) $ associated to the triangulation $\sigma$ of $(S, M)$ is defined as follows.
\begin{enumerate}
    \item $(Q_{\sigma})_{0} =  (Q'_{\sigma})_{0} \cup F $;
    \item For every marked point $P_{i}$ with at least one incident $\gamma \in \sigma$, there is an \textit{external arrow} $\xymatrix{(i-1) \ar[r]^{Y_{i}} & i}$. The set of arrows of $(Q_{\sigma})$ is given by 
\begin{align*}
(Q_{\sigma})_{1} = \lbrace \alpha_{i, j}: i, j \in (Q'_{\sigma})_{0} \rbrace \bigcup \lbrace Y_{i}:  1\leq i\leq {\mid M\cap\partial_{S}\mid} \rbrace;
\end{align*}
    \item The potential is $ W_{\sigma} = W'_{\sigma} - \sum \Delta_{P_{i}}, $ where $\Delta_{P_{i}} $ are anti-clockwise orientation cycles called \textit{big cycles}, formed around each marked points $P_{i}$. 
\end{enumerate}
The frozen Jacobian algebra is defined by $\Gamma_{\sigma} = K\langle\langle Q_{\sigma} \rangle\rangle / \mathcal{J}(W_{\sigma}, F)$,  where
\begin{equation}
    \mathcal{J}(W_{\sigma}, F) = \langle\langle \partial_{\alpha}W_{\sigma} \: \vert \: \alpha \in (Q_{\sigma})_{1} : s(\alpha)\notin F \hspace{0.17cm} or \hspace{0.17cm} b(\alpha)\notin F \rangle\rangle.
    \label{ideal jaco}
\end{equation}
\end{definition}

\begin{remark}
We can admit there exist an external arrow $Y_{i}$ around each marked point $P_{i}$ laying on $\partial_{S}$. In this case, the jacobian ideal corresponding is
\begin{equation}
    \mathcal{J}(W_{\sigma}, F) = \langle\langle \partial_{\alpha}W_{\sigma} \: \vert \: \alpha \in (Q_{\sigma})_{1}\setminus \lbrace Y_{i}\rbrace  \rangle\rangle.
\end{equation}
Each of these two definitions of $ \mathcal{J}(W_{\sigma}, F)$ will be used if necessarily. The corresponding two frozen Jacobian algebra are isomorphic, see \cite{DL1}, \cite{DL2}. 
\end{remark}
\begin{figure}[!htb]
    \centering
    \includegraphics[width = 6cm]{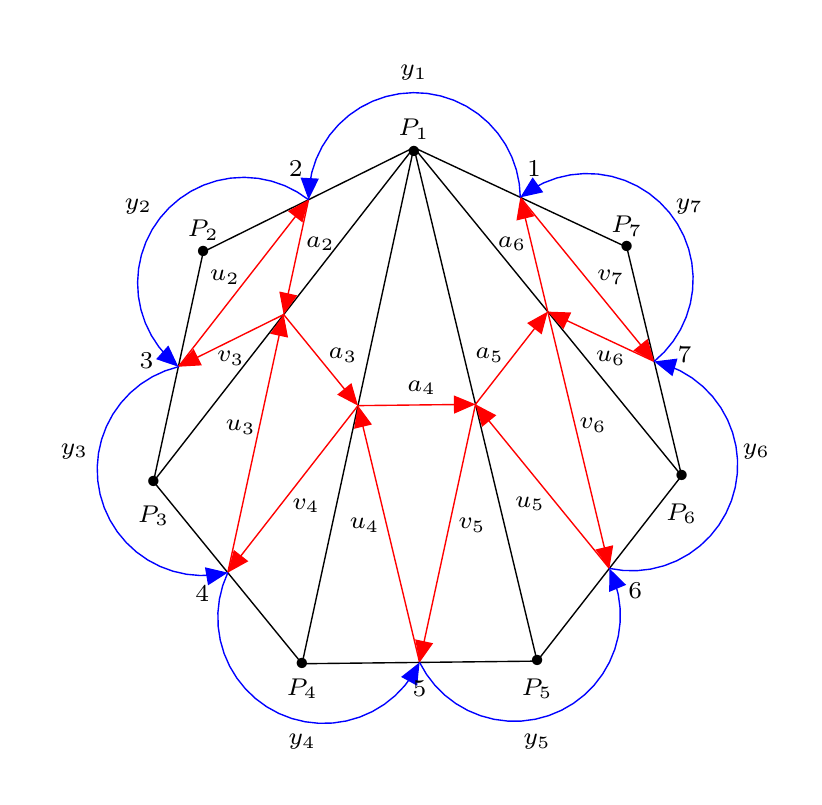}
     \caption{Quiver $Q_{\sigma}$ associated to $(P_{7, 0})$.}
\label{exemp quiver of P(7, 0)}
\end{figure}
\begin{example}
Let $Q_{\sigma}$ be the quiver associated to the heptagon without puncture $(P_{7, 0})$ in the figure \ref{exemp quiver of P(7, 0)} with potential $ W_{\sigma} = \Sigma_{i=2}^{6}v_{i+1}u_{i}a_{i} - \Sigma_{i=2}^{7}u_{i}v_{i}y_{i} - a_{2} \cdots a_{6}y_{1}$, with $ v_{2} = e_{1} $ and $ u_{7} = e_{7} $,
and set of frozen vertices $F = \lbrace 1, 2, 3, 4, 5, 6, 7 \rbrace $.
Then the Jacobian ideal is 
 \begin{equation*}
    \mathcal{J}(W_{\sigma}, F) = 
    \langle\langle 
    u_{i}a_{i} - y_{i+1}u_{i+1}, a_{i}v_{i+1} - v_{i}y_{i}, v_{i+1}u_{i} - a_{i+1} \cdots a_{n+2}y_{1}a_{2} \cdots a_{i-1}
    \rangle\rangle.
\end{equation*}
\end{example}

\begin{example}
Consider $Q_{\sigma}$ be a quiver associated to the square with one puncture $(P_{4, 1})$ in the figure \ref{exemp quiver of P(4, 1)} with potential $  W_{\sigma} = \Sigma_{i=2}^{4}v_{i+1}u_{i}a_{i} - \Sigma_{i=1}^{4}u_{i}v_{i}y_{i} - a_{1}a_{2}a_{3}a_{4} $ and set of frozen vertices $F = \lbrace 1, 2, 3, 4 \rbrace $.
\begin{figure}[!htb]
    \centering
    \includegraphics[width = 5cm]{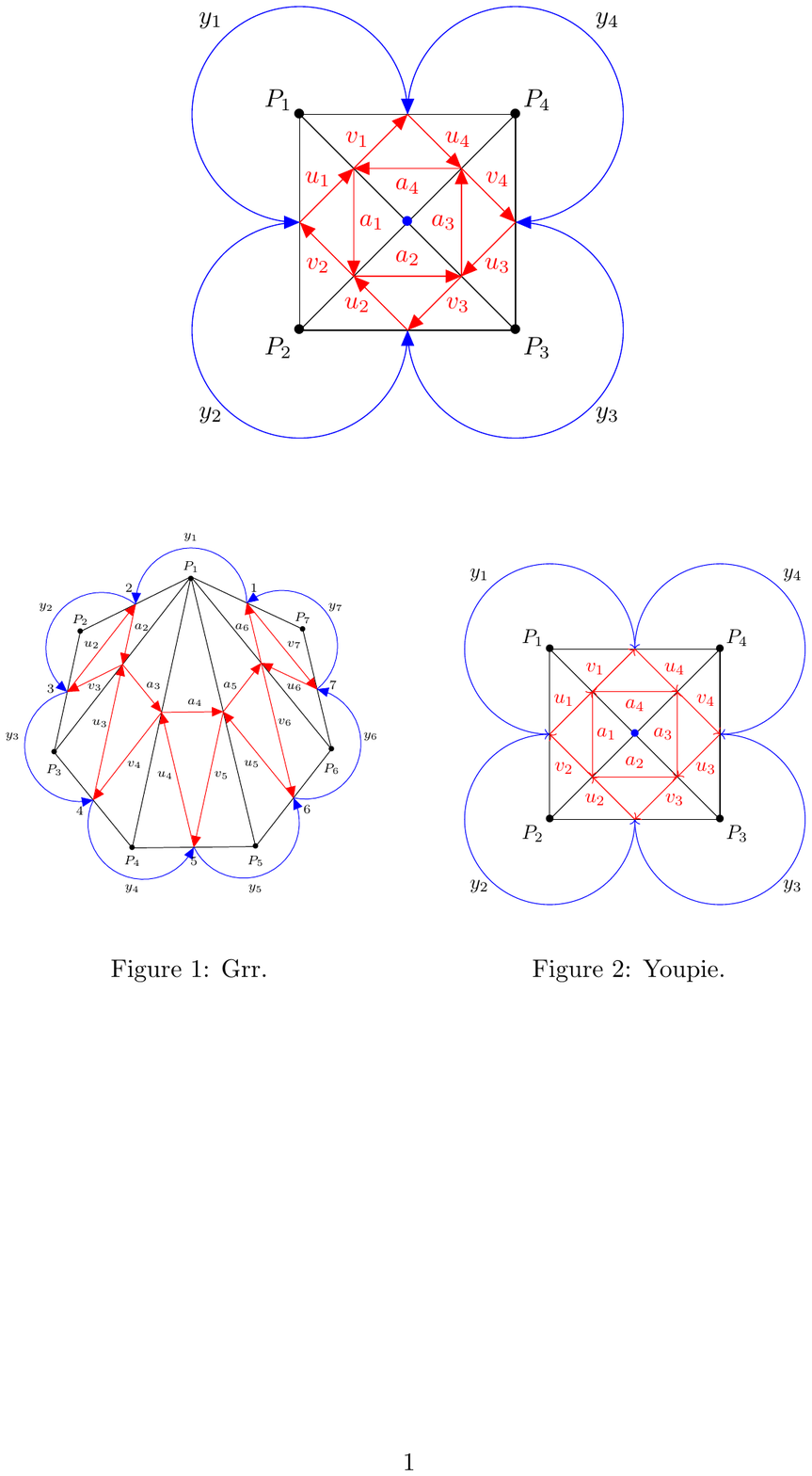}
     \caption{Quiver $Q_{\sigma}$ associated to $ (P_{4,1}) $.}
\label{exemp quiver of P(4, 1)}   
\end{figure}
Then the Jacobian ideal is 
 \begin{equation*}
    \mathcal{J}(W_{\sigma}, F) = 
    \langle\langle 
    ua - yu, av - vy, vu - a^{n+2}
    \rangle\rangle.
\end{equation*}
\end{example}

Let $ e = \sum_{i\in F} e_{i} $ be the sum of all primitives idempotents corresponding to the frozen vertices in $F$. Defined the boundary algebra $B(\sigma) = e{\Gamma_{\sigma}}e$ and the quotient $\mathcal{P}(\sigma) = \Gamma_{\sigma} / \langle e \rangle $. We have the recollement \cite{CPS88} 
\begin{equation}
    \xymatrix{ \mod{\mathcal{P}(\sigma)}  \ar[r] \ar@/^.5cm/[r]^{-\otimes_{\Gamma_{\sigma}}\mathcal{P}(\sigma)} & \mod{\Gamma_{\sigma}} \ar@/^.5cm/[r]^{-\otimes_{B(\sigma)}e\Gamma_{\sigma}} \ar@/^.5cm/[l]^{\Hom_{\Gamma_{\sigma}}(\mathcal{P}(\sigma), -)} \ar[r] & \mod{B(\sigma)} \ar@/^.5cm/[l]^{\Hom_{B(\sigma)}(\Gamma_{\sigma}e, -)} }, 
\end{equation}
where $\mathcal{P}(\sigma)$ is Jacobi-finite if $e \neq 0$. Let $\mathcal{G}(\mathcal{P}(\sigma)$ the corresponding Ginzburg dg-algebra. 
\begin{figure}[hbt!]
    \centering
    \includegraphics[width= 9.cm]{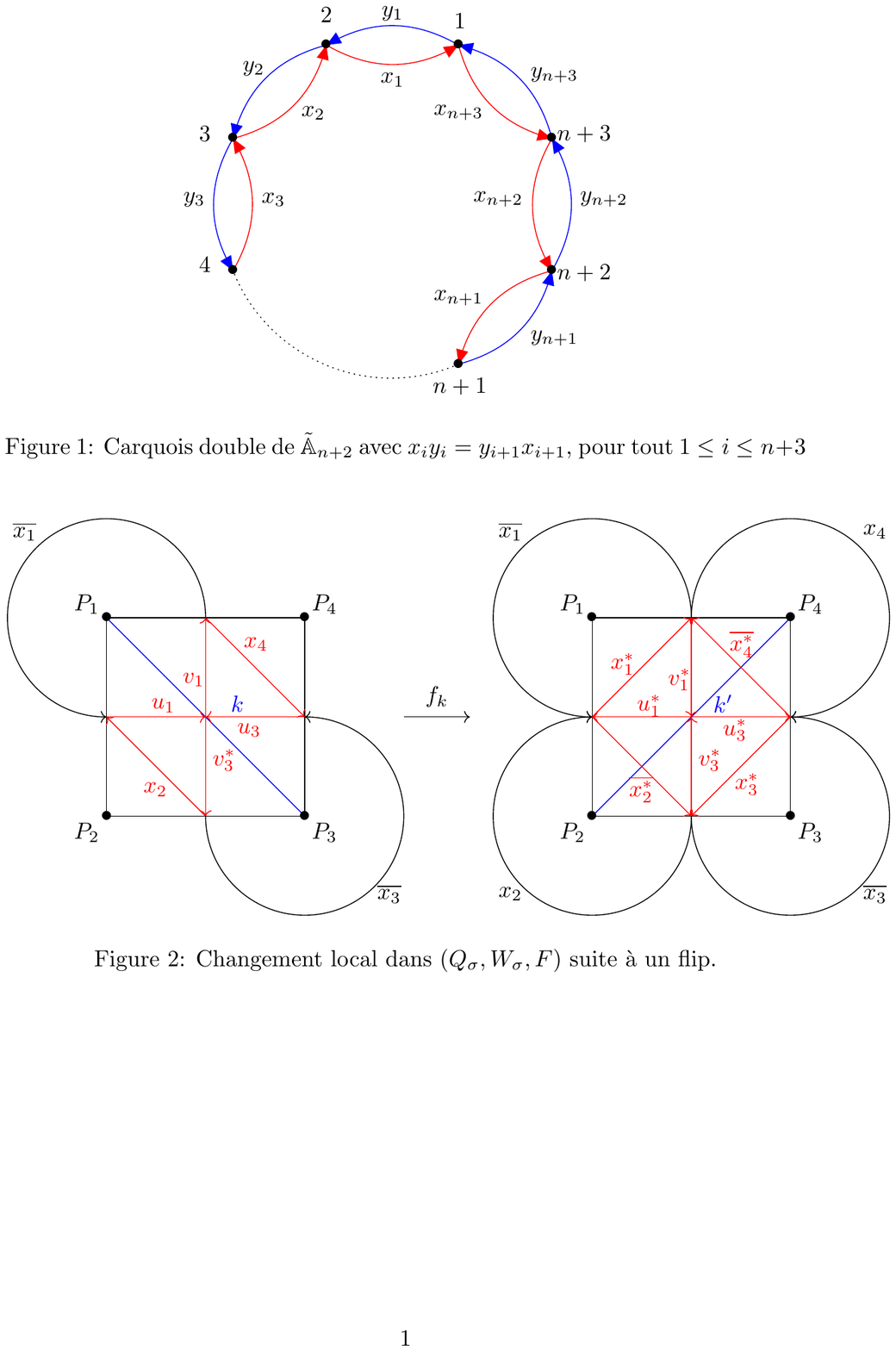}
   \caption{Local transformation in $(Q_{\sigma}, W_{\sigma}, F)$ after flip at $k$.}
\label{Chang local}
\end{figure}

\begin{theorem} \label{Th Pr Ch3} 
The boundary algebra $B(\sigma)$ of the Jacobian algebra $\Gamma_{\sigma}$ associated to the triangulation $\sigma$ of a bordered marked surface with punctures $(S,M)$ depends only on the surface, that is $B(\sigma) = B(S, M)$
\end{theorem}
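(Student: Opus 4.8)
The plan is to reduce the statement to the invariance of the boundary algebra under a single flip, and then to prove that invariance by a direct, local comparison of the two frozen Jacobian algebras. Since any two triangulations of $(S,M)$ are connected by a finite sequence of flips \cite{FST08}, and since the relation ``$B(\sigma)\cong B(\sigma')$ whenever $\sigma'$ is obtained from $\sigma$ by one flip'' is transitive, it suffices to treat the case $\sigma'=(\sigma\setminus\{k\})\cup\{k'\}$ for a single internal arc $k$. Crucially, a flip only ever replaces an \emph{internal} arc, so the flipped vertex $k$ never lies in $F$; it is therefore a mutable vertex of the ice quiver with potential $(Q_\sigma,W_\sigma,F)$. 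By Labardini-Fragoso's theorem \cite{Lab09}, the ice quiver with potential attached to the flipped triangulation is exactly the mutation at $k$, that is $(Q_{\sigma'},W_{\sigma'},F)=\mu_k(Q_\sigma,W_\sigma,F)$, so the task becomes: \emph{show that mutating an ice quiver with potential at a mutable vertex leaves the boundary algebra $e\Gamma e$ unchanged up to isomorphism.}

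First I would fix the identification of vertices. The flip leaves all arcs other than $k$ unchanged and keeps every external edge, so there is a canonical bijection $(Q_\sigma)_0\setminus\{k\}\xrightarrow{\sim}(Q_{\sigma'})_0\setminus\{k'\}$ restricting to the identity on $F$; in particular the frozen idempotent $e=\sum_{i\in F}e_i$ is literally the same element on both sides. The next, and central, input is the \emph{locality} of the mutation recorded in Figure~\ref{Chang local}: $\mu_k$ alters arrows and relations only inside the neighbourhood of $k$ consisting of the (at most four) triangles adjacent to $k$, while the external arrows $Y_i$, the frozen vertices, and all arrows and relations supported away from $k$ are untouched. Consequently any path between two frozen vertices that avoids this neighbourhood is a path on both sides, and contributes identically to $e\Gamma_\sigma e$ and $e\Gamma_{\sigma'}e$.

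With this in place I would construct the comparison map $\varphi\colon e\Gamma_\sigma e\to e\Gamma_{\sigma'}e$ explicitly. On the idempotents $e_i$ with $i\in F$, and on boundary-to-boundary paths not meeting the mutated region, $\varphi$ is the identity. For a boundary-to-boundary path that enters the region, I use the mutation rule: each length-two subpath $\xymatrix{i\ar[r]^{\alpha}&k\ar[r]^{\beta}&j}$ through $k$ is replaced by the new arrow $[\alpha\beta]$, and each subpath turning at an outer vertex of the region is rewritten using the mesh relations $u_ia_i=y_{i+1}u_{i+1}$ and $a_iv_{i+1}=v_iy_i$ coming from $\partial W_\sigma$. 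The content of the proof is then to check that $\varphi$ is a well-defined algebra homomorphism, i.e. that the image of every Jacobian relation $\partial_\alpha W_\sigma$ supported near $k$ lies in $\mathcal{J}(W_{\sigma'},F)$, and to exhibit the inverse using the dual rewriting $[\alpha\beta]\mapsto\alpha\beta$ together with the relations $\partial\Delta_k$ produced by the mutation. Because both constructions are mutually inverse local substitutions on the finitely many boundary paths through the region, $\varphi$ is an isomorphism.

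The hard part will be this relation check, for two reasons. First, the potential $W_\sigma$ contains the ``big cycles'' $\Delta_{P_i}$ running around a marked point or puncture, and these can be long and may pass through $k$; one must trace carefully how such a cycle, and the long relation $v_{i+1}u_i=a_{i+1}\cdots a_{n+2}\,y_1\,a_2\cdots a_{i-1}$ it produces, transforms under $\mu_k$, and confirm that the induced identities among boundary-to-boundary paths agree before and after the flip. Second, mutation is defined only after passing to the reduced part $(\tilde Q_{red},\tilde W_{red})$, so I must show that the $2$-cycles created at $k$, and the cyclic-equivalence adjustments of the potential that accompany their cancellation, do not affect $e\Gamma e$; this is where I expect the bookkeeping to be most delicate, since a $2$-cycle cancellation can in principle rewrite a boundary path. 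The cleanest way around both difficulties is to restrict attention from the outset to the idempotent-truncated algebra: since $e\Gamma e$ sees only paths that start and end on the boundary, the internal $2$-cycles at $k$ and the interior portion of each big cycle are invisible to $e\Gamma e$, and the verification reduces to a finite, case-by-case comparison of the few boundary segments meeting the triangles around $k$, read off directly from Figure~\ref{Chang local}.
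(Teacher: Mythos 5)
Your overall strategy coincides with the paper's: reduce to a single flip $\sigma'=\mu_k(\sigma)$ at an internal (hence non-frozen) arc $k$, invoke the compatibility of flip and QP-mutation, and compare the two algebras by a local analysis of the quadrilateral around $k$ as in Figure~\ref{Chang local}. The difference lies in the decisive step, and that is where your plan has a genuine gap. You propose to define $\varphi$ directly on $e\Gamma_\sigma e$ by rewriting each boundary-to-boundary path as it crosses the mutated region, and then to check that the images of the Jacobian relations ``supported near $k$'' land in $\mathcal{J}(W_{\sigma'},F)$. But checking generators only suffices once you have a presentation of the source by generators and relations, and you never produce one: $e\Gamma_\sigma e$ is an idempotent subalgebra, not a quotient of a path algebra on the nose, and boundary-to-boundary paths can wind arbitrarily often through the region around $k$, so your claim that everything reduces to ``a finite, case-by-case comparison of the few boundary segments meeting the triangles around $k$'' is not yet justified. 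Your own worries about the big cycles and about the reduction step after mutation are symptoms of this missing presentation rather than side issues.

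The paper resolves exactly this point with an intermediate object you do not introduce: the vertex-deleted algebra $A=(1-e_k)KQ_\sigma(1-e_k)$, whose quiver is obtained from $Q_\sigma$ by deleting $k$ and adjoining one new arrow for each length-two path through $k$ (the four arrows $x_1,\overline{x_2},x_3,\overline{x_4}$ corresponding to $u_1v_1$, $u_1v_3$, $u_3v_3$, $u_3v_1$). For this algebra the induced relations are known explicitly: the relations coming from $W_{\sigma\setminus\{k\}}$ are unchanged, and $\partial_\alpha W_{\sigma,k}$ induces exactly eight commutativity relations $(a)$--$(d')$. Doing the same on the mutated side with $A'=(1-e_{k'})KQ_{\sigma'}(1-e_{k'})$ and the composite arrows $[u_iv_j]$ produces eight relations $(e)$--$(h')$, and the proof is completed by an explicit bijection of arrows under which the two lists match term by term, whence $A\cong A'$ compatibly with the relations. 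Since $k\notin F$ gives $e(1-e_k)=e$, cutting down by $e$ then yields $B(\sigma)\cong B(\sigma')$. This device is precisely what converts your ``hard part'' into a finite verification; without it, or some substitute presentation of $e\Gamma_\sigma e$, your argument does not close.
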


\begin{proof}
Any two triangulations are obtained by sequence of mutations. So it is sufficient to show $B(\sigma) = B(\sigma')$, where $\sigma' = \mu_{k}(\sigma)$. Therefor fix $\sigma$, arc $k$ and consider mutation at $k$, see Figure \ref{Chang local}. Let $\overline{x_{1}}, \overline{x_{3}} $ the paths connecting the anti-clockwise orientated cycles around the marked points $P_{1}$ and $ P_{3}$, respectively as shown in Figure~\ref{Chang local}. Let   $W_{\sigma\setminus \lbrace k\rbrace}$ be the potentials defined by cycles in which no arrow starts or ends at $k$ and $W_{\sigma, k}$ the potential defined by all cycles which arrow starts or ends at $k$. 
Consider the path algebra $A = (1 - e_{k})kQ_{\sigma}(1 - e_{k})$, whose quiver is defined by 
\begin{equation}
    Q_{A}=\left\lbrace
	     \begin{array}{ll}
		(Q_{A})_{0} = (Q_{\sigma})_{0}\setminus \lbrace k \rbrace \\
	    (Q_{A})_{1} = ((Q_{\sigma})_{1}\setminus \lbrace u_{1}, v_{1}, u_{3}, v_{3}\rbrace)\bigcup \lbrace x_{1}, \overline{x_{2}}, x_{3}, \overline{x_{4}}\rbrace
			  \end{array}
			  \right. ,
\end{equation}
where the new arrows $x_{1}, \overline{x_{2}}, x_{3},$ and $\overline{x_{4}}$ correspond bijectively to the path $ u_{1}v_{1}, u_{1}v_{3}, u_{3}v_{3}$ and $ u_{3}v_{1} $. Let $ \alpha \in (Q_{\sigma})_{1} \bigcap \lbrace u_{1}, v_{1}, u_{3}, v_{3} \rbrace $ and $\gamma \in (Q_{\sigma})_{1}\setminus F$. The
relations generated by $\partial_{\gamma \neq \alpha}W_{\sigma\setminus \lbrace k\rbrace}$ are the same in $ Q_{A} $ as in $ Q_{\sigma} $, but $\partial_{\alpha}W_{\sigma, k}$ induced news commutative relations in $ Q_{A} $ given by: $ (a):  x_{3}x_{2} = \overline{x_{4}}\overline{x_{1}} $ and $ (a'): \overline{x_{2}}x_{2} = x_{1}\overline{x_{1}} $, $ (b): x_{4}x_{3} = \overline{x_{1}}\overline{x_{2}} $ and $ (b'): \overline{x_{1}}x_{1} = x_{4}\overline{x_{4}} $, $ (c): x_{1}x_{4} = \overline{x_{2}}\overline{x_{3}} $ and $ (c'): \overline{x_{4}}x_{4} = x_{3}\overline{x_{3}} $, $ (d): x_{2}x_{1} = \overline{x_{3}}\overline{x_{4}} $ and $(d'): x_{2}\overline{x_{2}} = \overline{x_{3}}x_{3}$. The potential $ W_{\sigma'}$ associated to $\sigma' = \mu_{k}(\sigma)$ is $W_{\sigma'} = [W_{\sigma}] + \Delta_{k} = W_{\sigma\setminus \lbrace k\rbrace} + [W_{\sigma, k}] + \Delta_{k}$,
with 
\begin{equation*}
    \Delta_{k} = [u_{1}v_{3}]v_{3}^{\ast}u_{1}^{\ast} + [u_{3}v_{1}]v_{1}^{\ast}u_{3}^{\ast} - [u_{1}v_{1}]v_{1}^{\ast}u_{1}^{\ast} - [u_{3}v_{3}]v_{3}^{\ast}u_{3}^{\ast}.    
\end{equation*}
Denoted $[u_{1}v_{1}] = x_{1}^{\ast}$, $[u_{1}v_{3}] = \overline{x_{2}^{\ast}}$, $[u_{3}v_{3}] = x_{3}^{\ast}$ and $[u_{3}v_{1}] = \overline{x_{4}^{\ast}}$ and consider the path algebra $A'=(1 - e_{k'})kQ_{\sigma'}(1 - e_{k'})$, whose quiver is defined by
\begin{equation}
    Q_{A'}=\left\lbrace 
	     \begin{array}{ll}
		(Q_{A'})_{0} = (Q_{\sigma'})_{0}\setminus \lbrace k' \rbrace \\
	    (Q_{A'})_{1} = ((Q_{\sigma'})_{1}\setminus \lbrace u_{1}^{\ast}, v_{1}^{\ast}, u_{3}^{\ast}, v_{3}^{\ast}\rbrace)\bigcup \lbrace \overline{x_{1}^{\ast}}, x_{2}^{\ast}, \overline{x_{3}^{\ast}}, x_{4}^{\ast}\rbrace
			  \end{array}
			  \right.
\end{equation}
The arrows $ \overline{ x_{1}^{\ast}}, x_{2}^{\ast}, \overline{ x_{3}^{\ast}}$, $x_{4}^{\ast}$ correspond to the paths $  v_{1}^{\ast}u_{1}^{\ast}, v_{3}^{\ast}u_{1}^{\ast}, v_{3}^{\ast}u_{3}^{\ast}$, $ v_{1}^{\ast}u_{3}^{\ast} $ in $Q_{\sigma'}$, respectively. Let $\alpha^{\ast} \in (Q_{\sigma'})_{1} \bigcap \lbrace u_{1}^{\ast}, v_{1}^{\ast}, u_{3}^{\ast}, v_{3}^{\ast}  \rbrace $, $\beta^{\ast} \in (Q_{\sigma'})_{1} \bigcap \lbrace x_{1}^{\ast}, \overline{x_{2}^{\ast}}, x_{3}^{\ast}, \overline{x_{4}^{\ast}} \rbrace $. 
We have
\begin{align*}
\partial_{\alpha \rightarrow \lbrace \alpha^{\ast},\beta^{\ast}\rbrace}([\mathcal{W}_{\sigma, k}] + \Delta_{k}) &= \partial_{\alpha^{\ast}}[\mathcal{W}_{\sigma, k}] + \partial_{\beta^{\ast}}\Delta_{k}, 
\end{align*} 
with
\begin{align*}
[\mathcal{W}_{\sigma, \ k}] &= [u_{1}v_{3}]x_{2} + [u_{3}v_{1}]x_{4} - [u_{1}v_{1}]\overline{x_{1}} + [u_{3}v_{3}]\overline{x_{3}}\\
							&= \overline{x_{2}^{\ast}}x_{2} + \overline{x_{4}^{\ast}}x_{4} -  x_{1}^{\ast}\overline{x_{1}} - x_{3}^{\ast}\overline{x_{3}}
\end{align*}
The term $\partial_{\beta^{\ast}}\Delta_{k}$ induces in $A'$ the new relations $ (e): x_{1}^{\ast}x_{4}^{\ast} = \overline{x_{2}^{\ast}}\overline{x_{3}^{\ast}} $ and $ (e'): x_{1}^{\ast}\overline{x_{1}^{\ast}} = \overline{x_{2}^{\ast}}x_{2}^{\ast} $, $ (f): \overline{x_{3}^{\ast}}\overline{x_{4}^{\ast}} = x_{2}^{\ast}x_{1}^{\ast} $ and $ (f'): x_{4}^{\ast}\overline{x_{4}^{\ast}} = \overline{x_{1}^{\ast}}x_{1}^{\ast} $, $ (g): \overline{x_{4}^{\ast}}\overline{x_{1}^{\ast}} = x_{3}^{\ast}x_{2}^{\ast} $ and $ (g'): \overline{x_{4}^{\ast}}x_{4}^{\ast} = x_{3}^{\ast}\overline{x_{3}^{\ast}} $, $ (h): \overline{x_{1}^{\ast}}\overline{x_{2}^{\ast}} = x_{4}^{\ast}x_{3}^{\ast} $, and $ (h'): x_{2}^{\ast}\overline{x_{2}^{\ast}} = \overline{x_{3}^{\ast}}x_{3}^{\ast} $. A bijective identification of the arrows $x_{i\in \lbrace 1, 3 \rbrace}$, $\overline{x}_{i\in \lbrace 2, 4 \rbrace} $ in $A$ by the new arrows $ x_{i\in \lbrace 1, 3 \rbrace}^{\ast}$, $ \overline{x}_{i\in \lbrace 2, 4 \rbrace}^{\ast} $ in $A'$ and arrows $\overline{x}_{i\in \lbrace 1, 3 \rbrace} $, $x_{i\in \lbrace 2, 4 \rbrace}$ by the arrows $\overline{x}_{i\in \lbrace 1, 3 \rbrace}^{\ast} $, $x_{i\in \lbrace 2, 4 \rbrace}^{\ast}$ gives an isomorphism of quiver algebras $\varphi: A' \to A $ 
\begin{equation*}
     \begin{array}{lrcl}
\varphi_{0} : & A'_{0} & \longrightarrow & A_{0} \\
    & e_{i} & \longmapsto & e_{i}, \: \mbox{$i\neq k$} \end{array}
\end{equation*}
and
\begin{equation*}
   \begin{array}{lrcl}
      \varphi_{1} : A'_{1} & \longrightarrow  & A_{1} \\
        & \alpha & \longmapsto & {\left\lbrace 
\begin{array}{ll}
			   \overline{x_{i}}, & \mbox{if $\alpha\in \lbrace \overline{x_{i}^{\ast}}: i= 1, 2, 3, 4 \rbrace $} \\
			   x_{i}, & \mbox{if $\alpha\in \lbrace x_{i}^{\ast}: i= 1, 2, 3, 4 \rbrace $} \\
			   \alpha, & \mbox{otherwhise}.
			  \end{array}
			  \right.}
   \end{array}
\end{equation*}
It is easy to see that $\varphi(e) = (c)$ and $\varphi(e') = (a')$, $\varphi(f) = (d)$ and $\varphi(f') = (b')$, $\varphi(g) = (a)$ et $\varphi(g') = (c')$, $\varphi(h) = (b)$ and $\varphi(h') = (d')$. Therefore, the algebras $ B(\sigma)$ are $B(\sigma')$ are isomorphic. 
\end{proof}

\section{Application}
\subsection{Case of (n+3)-gon with p-punctures.}
Let $K$ be an algebraically closed field and $G$ a finite group such that the characteristic of
$K$ does not divide the cardinality of $G$.  

If $A$ is a $K$-algebra and if $G$ acts on the right of $A$, the skew group algebra of $A$ under this action denoted  $ A \ast G  $ is defined as the left free $ A $-module with base is all elements of $G$

\begin{equation*}
 A\ast G = \lbrace \sum_{i=1}^{\vert G \vert} a_{i}g_{i} \; \vert \; a_{i}\in A, \; g_{i} \in G\rbrace = \bigoplus_{g}Ag,    
\end{equation*} 
and whose multiplication is linearly generated by 
\begin{equation*}
    (ag)(a'g') = ag(a')gg'
\end{equation*}
\begin{figure}[!htb]
    \centering
    \includegraphics[width =5cm]{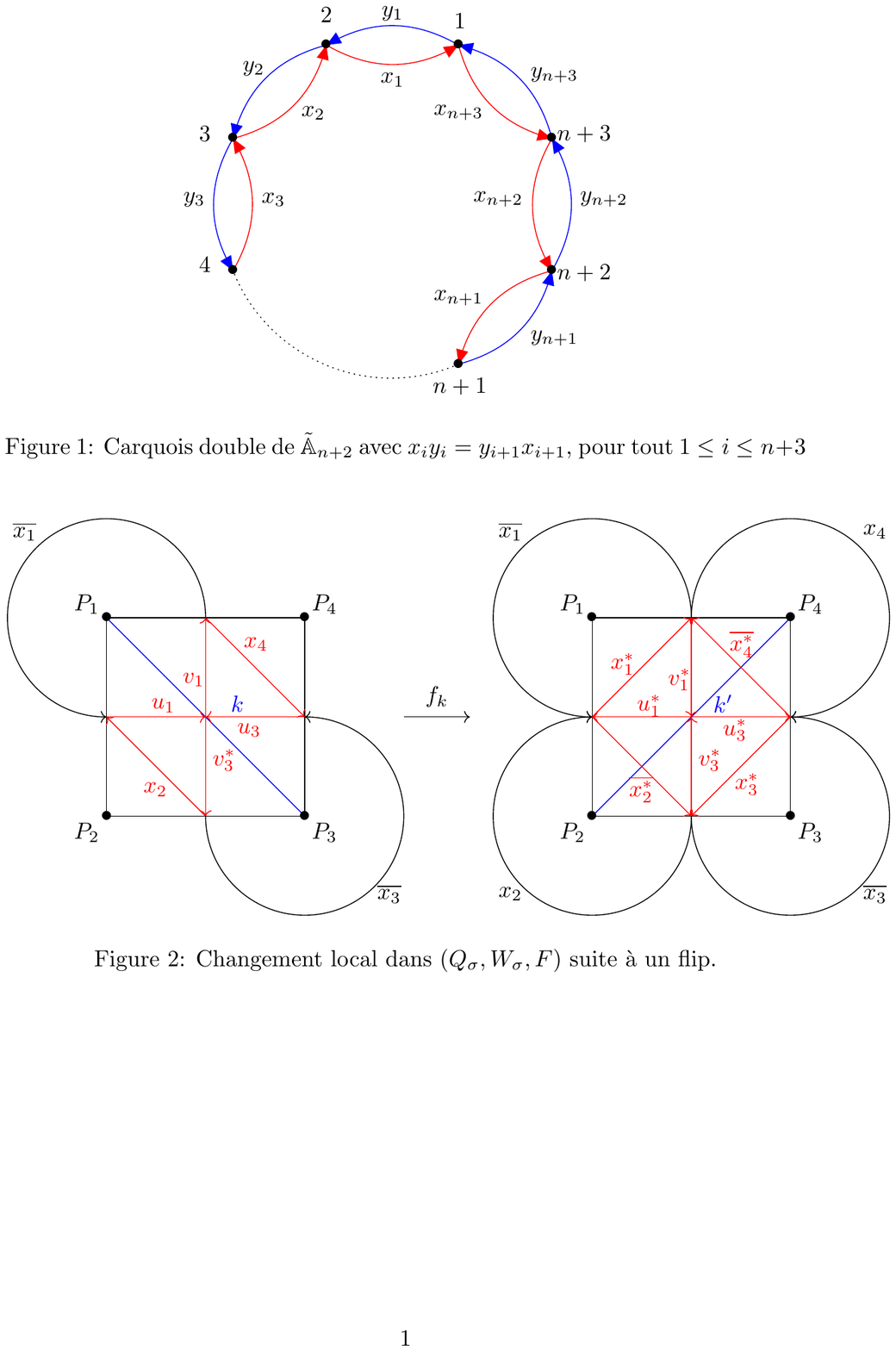}
    \caption{Quiver of $B(\sigma)$ corresponding to the quiver $\tilde{\mathbb{A}}_{n+2}$
    }
\label{cq}
\end{figure}
For example, the finite cyclic group of order $n+3\in \NN$ 
\begin{equation} 
\ZZ_{n+3} = \lbrace \bigl(\begin{smallmatrix}
 \zeta & 0 \\ 
     0 &   \zeta^{-1}
\end{smallmatrix}\bigr) \quad \vert \quad \zeta = e^{i{\frac{2k\pi}{n+3}}}, \; \mbox{k = 1, 2, ..., n+3} \rbrace 
\label{groupe cyclic de Klein}
\end{equation}  
act on $ \mathbf{C}[x, y] $ as
\begin{equation}
    \xymatrix{x \ar@{|->}[r] & \zeta x & \mbox{and} & y \ar@{|->}[r] & \zeta^{-1} y} 
    \label{action}
\end{equation}

\begin{proposition} 
The boundary algebra $B(\sigma)$ of the Jacobian algebra $\Gamma_{\sigma}$ associated to the triangulation $\sigma$ of $ (n+3)$-gone without puncture is isomorphic to the skew group algebra of $\ZZ_{n+3}$ on the Klein singularity $S_{n} = K[[x, y]] / x^2 = y^{n+1}$ of type $\mathbb{A}_{n}$.
\end{proposition}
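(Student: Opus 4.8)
The plan is to exploit Theorem~\ref{Th Pr Ch3}: since $B(\sigma)$ does not depend on the triangulation, I am free to compute it for one convenient triangulation of the $(n+3)$-gon and then identify the resulting bound quiver algebra with the basic algebra of the skew group algebra $S_n \ast \mathbb{Z}_{n+3}$. First I would fix the fan (or zig-zag) triangulation $\sigma$ and read off $(Q_\sigma, W_\sigma, F)$ exactly as in the heptagon Example; here $F = \lbrace 1, \dots, n+3 \rbrace$ consists of the $n+3$ external edges, cyclically ordered, and $e = \sum_{i \in F} e_i$.

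Second, I would compute the quiver and relations of $B(\sigma) = e\Gamma_\sigma e$ directly. The external arrows $Y_i : (i-1) \to i$ furnish a directed $(n+3)$-cycle of \emph{forward} arrows, and the shortest interior paths joining two consecutive boundary edges (the boundary-to-interior-to-boundary compositions such as $v_{i+1}u_i$, read inside $e\Gamma_\sigma e$) furnish a cycle of \emph{backward} arrows; together they give the double cycle $\tilde{\mathbb{A}}_{n+2}$ of Figure~\ref{cq}. Using the Jacobian relations, in particular the relation $v_{i+1}u_i = a_{i+1}\cdots a_{n+2}y_1 a_2 \cdots a_{i-1}$ coming from the big cycle together with $u_i a_i = y_{i+1}u_{i+1}$ and $a_i v_{i+1} = v_i y_i$, I would reduce every path between boundary vertices to a normal form and show that the induced relations on $B(\sigma)$ are exactly a commutativity relation at each vertex together with one relation identifying the square of the forward step with the backward path of length $n+1$.

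Third, on the singularity side, I would compute the basic algebra of $S_n \ast \mathbb{Z}_{n+3}$. Decomposing $1 = \sum_i e_i$ into the primitive idempotents attached to the characters $\chi_i : \zeta \mapsto \zeta^i$ of $\mathbb{Z}_{n+3}$, the space $e_i(S_n \ast \mathbb{Z}_{n+3})e_j$ is the weight-$(j-i)$ component of $S_n$ under the action \eqref{action}. Since $x,y$ have weights $+1,-1$, they contribute arrows $i \to i+1$ and $i \to i-1$, giving again the McKay double cycle $\tilde{\mathbb{A}}_{n+2}$; and since $x^2 - y^{n+1}$ is semi-invariant (both terms have weight $2 \equiv -(n+1) \pmod{n+3}$, using $m = n+3$), the ideal is $\mathbb{Z}_{n+3}$-stable, the relation $x^2 = y^{n+1}$ descends, and together with $xy = yx$ it yields precisely the commutativity relation and the ``square equals length-$(n+1)$ backward path'' relation found above. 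Matching $x,y$ with the forward and backward boundary arrows then gives the desired isomorphism, and I would confirm bijectivity by checking weight space by weight space that $\dim_K e_i B(\sigma) e_j = \dim_K (S_n)_{j-i}$, comparing both with the monomial basis $\lbrace x^\varepsilon y^b : \varepsilon \in \lbrace 0,1 \rbrace,\ b \geq 0 \rbrace$ of $S_n$.

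The main obstacle is the second step: carrying out the computation of $e\Gamma_\sigma e$ carefully enough to be sure that (i) the backward arrows and the single length-$(n+1)$ relation are the only new data produced, i.e. that no further relations survive the passage to the subalgebra $e\Gamma_\sigma e$ (here one must genuinely determine generators and relations of $e\Gamma_\sigma e$, not merely restrict the Jacobian ideal, since the long defining path passes through non-frozen vertices), and (ii) that completions are handled correctly, as $\Gamma_\sigma$ is a completed path algebra while $S_n$ is taken over $K[[x,y]]$ rather than the polynomial ring appearing in \eqref{action}. Establishing (i) amounts to the normal-form argument that matches a $K$-basis of $B(\sigma)$ with the monomial basis of $S_n$, after which the isomorphism is forced.
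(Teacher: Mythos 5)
Your proposal is correct and follows essentially the same route as the paper: fix the fan triangulation, identify the quiver of $e\Gamma_\sigma e$ with the double cycle $\tilde{\mathbb{A}}_{n+2}$, reduce the Jacobian relations to commutativity plus a single $x^2=y^{n+1}$-type relation, and match this against $K[[x,y]]\ast\ZZ_{n+3}\simeq\Pi(\tilde{\mathbb{A}}_{n+2})$ via the character idempotents, checking that the ideal $\langle x^2-y^{n+1}\rangle$ is $\ZZ_{n+3}$-stable; the paper settles your ``main obstacle'' (a basis of $e\Gamma_\sigma e$) by invoking that all relations are path-identifications, so a path basis of $KQ_\sigma$ descends to $\Gamma_\sigma$ and then to normal forms $\lbrace \overline{e'},\overline{xy^{m}},\overline{y^{m+1}}\rbrace$. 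One small correction: the computation gives the relation the other way around from what you state --- the square of the \emph{interior} composite $u_iv_i$ equals the $(n+1)$-fold product of the boundary arrows $Y_i$, not vice versa --- which is harmless for the final isomorphism (the presentation and the skew group algebra are both symmetric under swapping the two cycles together with $\zeta\leftrightarrow\zeta^{-1}$), but matters when you assign weights to $x$ and $y$.
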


\begin{proof}
The action of $\ZZ_{n+3}$ in (\ref{action}) on the quiver  
\begin{equation*}
    \xymatrix{1\ar@<-3pt>_{x}@(ul,dl)\ar@<-3pt>_{y}@(dr,ur)} / xy = yx
\end{equation*}
decomposes the identity $1$ into the orthogonal primitives idempotents
\begin{align}
e_{i+1} = \frac{1}{n+3}(1 + \zeta^{i}g + \zeta^{2i}g^{2} + \cdots + \zeta^{i(n+2)}g^{n+2})
\label{idempotent}
\end{align} 
corresponding to the vertices in $\tilde{\mathbb{A}}_{n+2}$. This action induces the preprojective algebra $\Pi(\tilde{\mathbb{A}}_{n+2}) \simeq K[[x,y]]\ast \ZZ_{n+3} $. Let $\sigma = \lbrace d_{1, i} = (P_{1}, P_{i})\vert 3\leq i \leq n+2 \rbrace $ be a triangulation of $(P_{n+3, 0})$ as shown in Example \ref{exemp quiver of P(7, 0)}. Define $ x_{i} = u_{i}v_{i} $, for all $ 2 \leq i \leq n+ 3 $ and $ x_{1} = a_{2}a_{3}\cdots a_{n+2} $.

The quiver of $B(\sigma) $ corresponds to that of $\Pi(\tilde{\mathbb{A}}_{n+2})$ up to relation $ \sim $. Let $  \mathcal{B} = \lbrace e_{i}, e_{i'}, \alpha _{i,j}, \beta_{k,l}\rbrace $, where $ i,j \in F $, $ i'\notin F $ and $ k $ or $ l \notin F $ be a base of $ kQ_{\sigma} $. Since all relations in $ KQ_{\sigma} $ are commutative relations, then $(  \mathcal{B} / \sim )$ is also a base of $ \Gamma_{\sigma} $ \cite[lemma 2.11.]{DL1}. Thus, $ e_{i}(\mathcal{B} / \sim )e_{j} = \lbrace \overline{e_{i'}}, \overline{\alpha_{i,j}} \rbrace = \lbrace \overline{e_{j}}, \overline{x^{m}}, \overline{y^{m}}, \overline{ua^{m}v}, \overline{(xy)^{m}} \rbrace $, for any $ i' \in F $ and $ m \geq 1 $. The potential gives the relations $u_{i}a_{i} = y_{i+1}u_{i+1}$, $a_{i}v_{i+1} = v_{i}y_{i}$ et $v_{i+1}u_{i} = a_{i+1} \cdots a_{n+2}y_{1}a_{2} \cdots a_{i-1}$. We have $x_{i}y_{i} = y_{i+1}x_{i+1}, \; \mbox{for any $ 1 \leq i \leq n+3 $}$.
Moreover,
\begin{align} 
x_{i+1}x_{i} = u_{i+1}(v_{i+1}u_{i})v_{i} &= u_{i+1}(a_{i+1} \cdots a_{n+2}y_{1}a_{2} \cdots a_{i-1})v_{i} \\
            &= (u_{i+1}a_{i+1})a_{i+2} \cdots a_{n+2}y_{1}a_{2} \cdots a_{i-2}(a_{i-1}v_{i}) \\                                    			&= y_{i+2}(u_{i+2}a_{i+2}) \cdots a_{n+2}y_{1}a_{2} \cdots (a_{i-2}v_{i-1})y_{i-1}\\
            &= \prod _{k=1}^{n+1}y_{(i+1) +k}
\end{align}       
and
\begin{align}
u_{i}a_{i}a_{i+1} \cdots a_{i+m-1}v_{i+m} &= (u_{i}a_{i})a_{i+1} \cdots a_{i+m-1}v_{i+m} \\
									 &= y_{i+1}(u_{i+1}a_{i+1})a_{i+2} \cdots a_{i+m-1}v_{i+m} \\
									&= \prod _{k=1}^{m}y_{i+k}.x_{i+m}.
\end{align}
Then, $ e( \mathcal{B} / \sim )e = \lbrace \overline{e'}, \overline{xy^{m}}, \overline{y^{m+1}} \rbrace , $ for any $ i \in F $ and $ m \in \mathbb{N}$, shut that $ \overline{xy} = \overline{yx} $ and $ \overline{x^{2}} = \overline{y^{n+1}}$. Consider the ideal $ I = \langle x^{2} - y^{n+1}\rangle $ and the $K$-linear morphism $ \varphi: \Pi(\tilde{\mathbb{A}}_{n+2}) \rightarrow B(\sigma) $ defined by $ e' \mapsto e'$, $ x\mapsto \overline{x},$ et $ y\mapsto \overline{y} .$ By construction, $ \varphi $ is surjective and $ Ker\varphi = I $. Then $\Pi(\tilde{\mathbb{A}}_{n+2}) / I \simeq B(\sigma)$. Since $ I $ is $ \ZZ_{n+3} $-stable, then $ B(\sigma) \simeq S_{n}\ast \ZZ_{n+3}$ . 
\end{proof}

\begin{proposition} 
The boundary algebra $B(\sigma)$ of the Jacobian algebra $\Gamma_{\sigma}$ associated to the triangulation $\sigma$ of a $(n+3)$-gone with $1$ puncture is isomorphic to the skew group algebra of $\ZZ_{n+3}$ on the Klein singularity $S_{2n+3} = K[[x, y]] / x^2 = y^{2n+4}$ of type $\mathbb{A}_{2n+3}$.
\end{proposition}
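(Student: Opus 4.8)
The plan is to mirror the proof of the previous proposition, replacing only the defining relation of the singularity. By Theorem~\ref{Th Pr Ch3} the boundary algebra $B(\sigma)$ is independent of the triangulation, so I would fix the triangulation $\sigma$ of $(P_{n+3,1})$ all of whose internal arcs are incident to the puncture, generalising the once-punctured square $(P_{4,1})$ of Figure~\ref{exemp quiver of P(4, 1)}. Since the boundary of $(P_{n+3,1})$ is again an $(n+3)$-gon, the same cyclic group $\ZZ_{n+3}$ of (\ref{groupe cyclic de Klein}) acts through (\ref{action}), splitting the identity into the primitive idempotents (\ref{idempotent}) indexed by the vertices of $\tilde{\mathbb{A}}_{n+2}$, and the ambient algebra is once more the preprojective algebra $\Pi(\tilde{\mathbb{A}}_{n+2}) \simeq K[[x,y]]\ast \ZZ_{n+3}$. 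In particular the quiver of $B(\sigma)$ coincides with that of $\Pi(\tilde{\mathbb{A}}_{n+2})$ up to the relation $\sim$, exactly as in the unpunctured case; what must change is the relation linking $\overline{x}$ and $\overline{y}$.

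First I would read off the Jacobian relations of $W_{\sigma} = \sum_{i=2}^{n+3}v_{i+1}u_{i}a_{i} - \sum_{i=1}^{n+3}u_{i}v_{i}y_{i} - a_{1}a_{2}\cdots a_{n+3}$, namely $u_{i}a_{i} = y_{i+1}u_{i+1}$, $a_{i}v_{i+1} = v_{i}y_{i}$, together with the long relation $v_{i+1}u_{i} = a^{n+2}$ running around the puncture. Writing $x_{i} = u_{i}v_{i}$, the first two relations yield the preprojective commutativity $x_{i}y_{i} = y_{i+1}x_{i+1}$ for every $i$, so that the images $\overline{x}, \overline{y}$ generate a commutative subalgebra and $e(\mathcal{B}/\sim)e = \lbrace \overline{e'}, \overline{xy^{m}}, \overline{y^{m+1}} \rbrace$ as before. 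This gives a surjection $\varphi: \Pi(\tilde{\mathbb{A}}_{n+2}) \to B(\sigma)$ sending $e' \mapsto e'$, $x \mapsto \overline{x}$, $y \mapsto \overline{y}$, and the whole problem is reduced to computing $\ker\varphi$.

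The hard part will be the precise evaluation of this kernel, that is, establishing $\overline{x^{2}} = \overline{y^{2n+4}}$ in place of the unpunctured relation $\overline{x^{2}} = \overline{y^{n+1}}$. The doubling of the exponent is caused by the puncture: the long relation $v_{i+1}u_{i} = a^{n+2}$ now factors through the cycle $a$ encircling the interior marked point, and pushing these $a$'s across the relations $u_{i}a_{i} = y_{i+1}u_{i+1}$ turns each $a$ into a $y$. Because the tagged arcs at the puncture occur in plain/notched pairs, the path realising $\overline{x^{2}}$ must wind around the puncture twice before it returns with matching tagging, and this second passage is exactly what replaces $y^{n+1}$ by $y^{2(n+2)} = y^{2n+4}$. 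I expect the careful bookkeeping of this notched monodromy, which is the combinatorial shadow of the $\ZZ_{2}$ double cover behind the passage from type $\mathbb{A}$ to type $\mathbb{D}$, to be the main obstacle, and it is here that the type $\mathbb{A}_{2n+3}$ genuinely appears.

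Granting $\overline{x^{2}} = \overline{y^{2n+4}}$, I would put $I = \langle x^{2} - y^{2n+4}\rangle$ and conclude, as in the unpunctured argument, that $\ker\varphi = I$, whence $\Pi(\tilde{\mathbb{A}}_{n+2})/I \simeq B(\sigma)$. Since $\zeta$ scales $x$ by $\zeta$ and $y$ by $\zeta^{-1}$, one has $x^{2} - y^{2n+4} \mapsto \zeta^{2}(x^{2} - y^{2n+4})$ because $\zeta^{2n+6} = (\zeta^{n+3})^{2} = 1$; thus $I$ is $\ZZ_{n+3}$-stable, and since passing to the quotient commutes with forming the skew group algebra this yields $B(\sigma) \simeq S_{2n+3}\ast \ZZ_{n+3}$ with $S_{2n+3} = K[[x,y]]/(x^{2} - y^{2n+4})$ the Klein singularity of type $\mathbb{A}_{2n+3}$, as claimed.
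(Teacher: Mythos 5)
Your overall strategy---fix the fan triangulation at the puncture, realise $B(\sigma)$ as a quotient of $\Pi(\tilde{\mathbb{A}}_{n+2}) \simeq K[[x,y]]\ast \ZZ_{n+3}$ via $x\mapsto \overline{x}=\overline{uv}$, $y\mapsto\overline{y}$, and identify the kernel---is exactly the paper's strategy. But the step you defer as ``the hard part'' is resolved incorrectly. With $x_i=u_iv_i$, the Jacobian relations $ua=yu$, $av=vy$, $vu=a^{n+2}$ give $ua^{m}v=y^{m}x$ and hence $x^{2}=u(vu)v=ua^{n+2}v=xy^{n+2}$, \emph{not} $x^{2}=y^{2n+4}$. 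Your heuristic that the path realising $\overline{x^{2}}$ must wind twice around the puncture (because of plain/notched pairs) does not match the computation: the long cycle $a^{n+2}$ is traversed once and converts into $y^{n+2}$ times one surviving factor $x$; moreover the fan triangulation used here has all arcs plainly tagged, so there is no notched monodromy to invoke. Consequently $x^{2}-y^{2n+4}$ does not lie in $\ker\varphi$ for your choice of generators, and $I=\langle x^{2}-y^{2n+4}\rangle$ is not the kernel. What your argument can yield, once the relation $x^{2}=xy^{n+2}$ is actually derived, is $B(\sigma)\simeq \bigl(K[[x,y]]/x^{2}=xy^{n+2}\bigr)\ast\ZZ_{n+3}$.

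The missing idea is the final normal-form step. The paper completes the square: the substitution $x_{i}\mapsto \frac{1}{2}\bigl(x_{i}-\prod_{k=1}^{n+2}y_{i+k}\bigr)$, $y_{i}\mapsto y_{i}$ (legitimate since $x_{i}$ and $\prod_{k=1}^{n+2}y_{i+k}$ are parallel paths, and assuming $2$ is invertible in $K$) carries $x^{2}-xy^{n+2}$ to $x^{2}-y^{2n+4}$ and is compatible with the $\ZZ_{n+3}$-action; this is precisely where the Klein singularity of type $\mathbb{A}_{2n+3}$ appears. Without both the derivation of $\overline{x^{2}}=\overline{xy^{n+2}}$ and this change of variables, your proof does not go through.
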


\begin{proof}
We use the triangulation $ \sigma = \lbrace d_{i, \bullet} = (P_{i}, P_{\bullet}) \; \vert \; 1\leq i \leq n+3 \rbrace $ of $(P_{n+3, 1})$ as showed in Figure \ref{exemp quiver of P(4, 1)}, consisting to connect each marked point $P_{i}$ on $\partial S$ to the puncture $p_{\bullet}$. The quiver of $B(\sigma)$ correspond to that of $\Pi(\tilde{\mathbb{A}}_{n+2})$. The potential 
$\mathcal{W_{\sigma}} = \Sigma_{i=1}^{n+3}v_{i+1}u_{i}a_{i} -  \Sigma_{i=1}^{n+3}u_{i}v_{i}y_{i} -a_{1}a_{2} \cdots a_{n+3} $
gives $ua = yu$, $av = vy $ and $vu = a^{n+2}$. The two first relations give $ xy = yx $. Moreover,
\begin{align}
    x^{2} = u(vu)v =ua^{n+2}v = ua^{n+1}(av) = ua^{n+1}vy = \cdots &=  ua(av)y^{n} \\ 
    &= xy^{n+2} 
\end{align}
and
\begin{align}
    ua^{m}v = (ua)a^{m-1}v = y(ua)a^{m-2}v = \cdots &=y^{m}(uv) \\ 
    &= y^{m}x 
\end{align}
Then, one base of $ B(\sigma)$ is $ \lbrace \overline{e'}, \overline{xy^{m}}, \overline{y^{m+1}} \rbrace $, for any $ i \in F $ and $ m \in \mathbb{N} $ shut that $ \overline{xy} = \overline{yx} $ and $ \overline{x^{2}} = \overline{xy^{n+2}}.$ By the same argument as before, we obtained 
\begin{equation*}
    B(\sigma) \simeq (k[[x,y]]/x^{2} = xy^{n+2})\ast \ZZ_{n+3}
\end{equation*}
Finally, the transformation $ \xymatrix{x_{i} \ar@{|->}[r] & \frac{1}{2}(x_{i} - {\displaystyle \prod_{k=1}^{n+2}y_{i+k}})} $ and $\xymatrix{y_{i} \ar@{|->}[r] & y_{i}} $ gives the transformation 
$ \xymatrix{x^2 - xy^{n+2} \ar@{|->}[r] & x^2 - y^{(2n+3)+1} } $. 
\end{proof}
\begin{figure}[!hbt]
    \centering
    \includegraphics[width=7.8cm]{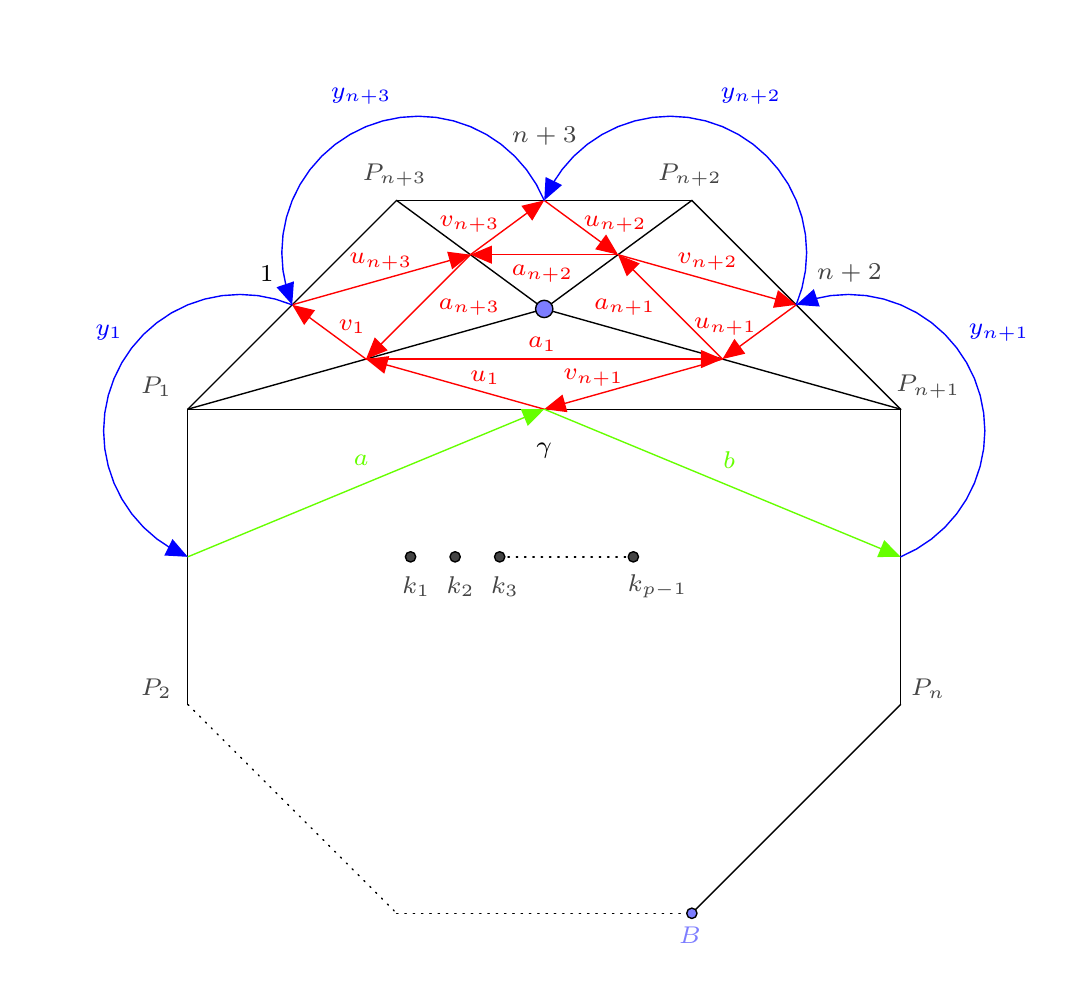}
    \caption{Triangulation $ \sigma' $ and $ \sigma'' $ in $ (P_{n+3,p}) $ }
 \label{p ponction}
\end{figure}
Denoted $\Delta_{n+3}^{p} = R_{p}\ast \ZZ_{n+3}$, where $R_{p}= K[[x, y]] / x^{2} = x^{p}y^{n+1+p}$ and $p\in \NN$. Let $ \sigma $ be a triangulation of $ (P_{n+3,p}) $ defined as in Figure~\ref{p ponction} and $ \Gamma_{\sigma}^{p} $ the corresponding Jacobian algebra, where $ \sigma'' $ and $ \sigma' $ are two triangulations associated to the polygons $ (P_{4,1}) $ and $ (P_{n+1,p-1}) $ defined by the vertices $ P_{1},P_{n+1}, P_{n+2}, P_{n+3} $ and $ P_{1},P_{2},...,P_{n+1} $, respectively. By construction, we have $ x_{i}y_{i} = x_{i+1}y_{i+1}$, for all $1\leq i\leq n+3$.

\begin{theorem} 
The boundary algebra $ B_{p}(\sigma)$ of the Jacobian algebra $ \Gamma_{\sigma}^{p} $ associated to the triangulation $\sigma$ of a $(n+3)$-gone with $p$ punctures is isomorphic to the skew group algebra $ R_{p}\ast \ZZ_{n+3}$, where 
\begin{equation*}
     \ZZ_{n+3} = \lbrace \bigl(\begin{smallmatrix}
 \zeta & 0 \\ 
     0 &   \zeta^{-1}
\end{smallmatrix}\bigr) \quad \vert \quad \zeta = e^{i{\frac{2k\pi}{n+3}}}, \mbox{where k = 1, 2, ..., n+3} \rbrace
\end{equation*}
\label{t3}
\end{theorem}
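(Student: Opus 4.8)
The plan is to argue by induction on the number of punctures $p$, taking the two preceding propositions as the base cases $p=0$ and $p=1$. For the inductive step I would fix the triangulation $\sigma$ of $(P_{n+3,p})$ drawn in Figure~\ref{p ponction}, in which the diagonal $(P_{1},P_{n+1})$ cuts the polygon into the square with one puncture $(P_{4,1})$, triangulated by $\sigma''$, and the $(n+1)$-gon with $p-1$ punctures $(P_{n+1,p-1})$, triangulated by $\sigma'$. Since Theorem~\ref{Th Pr Ch3} guarantees that $B_{p}(\sigma)$ is independent of the chosen triangulation, working with this particular $\sigma$ is harmless. I would then record, exactly as in the two propositions, that the quiver of the boundary algebra is $\tilde{\mathbb{A}}_{n+2}$ with boundary arrows $x_{i},y_{i}$, and that the relations coming from the potential collapse to the commutation relations $x_{i}y_{i}=x_{i+1}y_{i+1}$ stated just before the theorem. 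Consequently a basis of $B_{p}(\sigma)$ is given by classes $\overline{e'},\overline{x^{a}y^{b}}$, and the whole problem reduces to determining the single cycle relation expressing $\overline{x^{2}}$ as a monomial $\overline{x^{a}y^{b}}$.

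The heart of the argument is the computation of this relation by decomposing the boundary cycle defining $x^{2}$ along the cut $(P_{1},P_{n+1})$. The part of the cycle running through $(P_{n+1,p-1})$ is controlled by the inductive hypothesis: that piece is an $(N+3)$-gon with $N=n-2$ and $p-1$ punctures, so it contributes the relation $x^{2}=x^{p-1}y^{(n-2)+1+(p-1)}=x^{p-1}y^{n+p-2}$. The part running through the square with one puncture contributes, by the $p=1$ computation already carried out, one extra factor of $x$ together with the three boundary edges $P_{n+1}P_{n+2}$, $P_{n+2}P_{n+3}$, $P_{n+3}P_{1}$, that is three extra factors of $y$. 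Multiplying the two contributions and using $x_{i}y_{i}=x_{i+1}y_{i+1}$ to slide factors into a common vertex should yield $\overline{x^{2}}=\overline{x^{(p-1)+1}\,y^{(n+p-2)+3}}=\overline{x^{p}y^{n+1+p}}$, which is precisely the defining relation of $R_{p}$.

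Finally, arguing as in the two propositions, I would set $I=\langle x^{2}-x^{p}y^{n+1+p}\rangle$ and use the surjection $\Pi(\tilde{\mathbb{A}}_{n+2})\to B_{p}(\sigma)$ with kernel $I$ to obtain $\Pi(\tilde{\mathbb{A}}_{n+2})/I\simeq B_{p}(\sigma)$. Because $\zeta^{n+3}=1$ forces the two terms $x^{2}$ and $x^{p}y^{n+1+p}$ to carry the same weight under the action $x\mapsto\zeta x$, $y\mapsto\zeta^{-1}y$, the ideal $I$ is $\ZZ_{n+3}$-stable, and passing to skew group algebras gives $B_{p}(\sigma)\simeq(K[[x,y]]/I)\ast\ZZ_{n+3}=R_{p}\ast\ZZ_{n+3}$.

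The main obstacle I expect is the second step: justifying that the boundary cycle of the glued surface really factors as the product of the boundary contributions of the two pieces, and keeping the bookkeeping of the $x$- and $y$-exponents consistent across the gluing arc. This is exactly the place where the relation $x_{i}y_{i}=x_{i+1}y_{i+1}$ must be invoked geometrically, to transport factors of $y$ picked up around the square into the common vertex where the cycle closes, rather than treated as a purely formal rewriting. Everything else — the shape of the quiver, the basis description, and the $\ZZ_{n+3}$-stability of $I$ — follows the template of the $p=0$ and $p=1$ propositions verbatim.
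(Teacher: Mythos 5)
Your proposal is correct and follows essentially the same route as the paper: induction on $p$ with the two propositions as base cases, the decomposition of $(P_{n+3,p})$ along the diagonal $(P_{1},P_{n+1})$ into $(P_{4,1})$ and $(P_{n+1,p-1})$, the substitution of the composite variables $y_{1}^{n+1,p-1}$ and $y_{n+1}^{n+1,p-1}$ contributing one extra $x$ and three extra $y$'s via the square's relation, and the final passage to $R_{p}\ast\ZZ_{n+3}$. The bookkeeping obstacle you flag is exactly what the paper's case analysis (whether the chain of $x$'s crosses the cut, and the boundary indices $i\in\lbrace n+1,n+2,n+3,1,2\rbrace$) is there to resolve.
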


\begin{proof}
The cases $p =0$ and $p = 1$ are verified. Assume the recurrence hypothesis on $p$ is true for the rank $(p-1) $. Let $(-)_{i}$ be the variables corresponding to the paths $u_{i}v_{i}\cdots w_{i}$ and $ (P_{n+3,p}) $. Defined $(-)_{[i]}^{n+1,p-1}$ and $(-)_{[j}]^{4,1}$ those of $ (P_{n+1,p-1}) $ and $(P_{4,1})$, respectively. For any $i\neq \lbrace 1, n+1\rbrace$, we have $(-)_{[i]}^{4,1} = (-)_{i}$ in $(P_{4,1})$ and $(-)_{[i]}^{n+1,p-1} = (-)_{i}$ in $ (P_{n+1,p-1}) $. Moreover,
\begin{equation*}
    x_{1}^{n+1,p-1} = a_{1}; \; \; y_{1}^{n+1,p-1} = x_{1}y_{1}; \;\; x_{n+1}^{n+1,p-1} = b_{n+1} \; \; et \; \; y_{n+1}^{n+1,p-1} = y_{n+1}x_{n+1}^{4,1};
    \end{equation*}
\begin{equation*}
    x_{1}^{4,1} = v_{1}w_{1}; \; y_{1}^{4,1} = y_{1}y_{1}^{n+1,p-1}; \; x_{n+1}^{4,1} = u_{n+1}v_{n+1} \; et \; y_{n+1}^{n+1,p-1} = y_{n+1}^{n+1,p-1}y_{n+1}.
    \end{equation*}
In $( P_{n+1,p-1})$, the recurrence hypothesis gives, for any  $1\leq i\leq n+1$, 
\begin{align}
x_{i}x_{i-1} &= \prod_{k=1}^{p-1}x_{[i-k+1]}.\prod_{k=1}^{p-1}y_{[i-(p-k)+1]}.\prod_{k=1}^{n-1}y_{[i+k]}
\label{singularite p-1}
\end{align}
For $ i\neq \lbrace n+1, n+2, n+3, 1, 2\rbrace $: 

$(1).$ The patch $ \prod_{k=1}^{p-1}x_{[i-k+1]} $ not passed by $ x_{1}^{n+1,p-1} $, not by $ x_{n+1}^{n+1,p-1}$. Thus the path $ \prod_{k=1}^{n-1}y_{[i+k]} $ in $ B_{p-1}(\sigma'')$ passed by $ y_{n+1}^{n+1,p-1} $ and $ y_{1}^{n+1,p-1} $. But  
\begin{equation}
    (y_{n+1}^{n+1,p-1}.y_{1}^{n+1,p-1}) = y_{n+1}(x_{n+1}^{4,1}x_{1}^{4,1})y_{1} = y_{n+1}(x_{n+1}y_{n+1}y_{n+2}y_{n+3})y_{1}
\label{prod de yny1 dans Pn+1,p-1}    
\end{equation}
So, the path $ \prod_{k=1}^{n-1}y_{[i+k]} $ becomes in $ B_{p}(\sigma)$ 
\begin{equation}
    \prod_{k=1}^{n-1}y_{[i+k]}= x_{i}.\prod_{k=1}^{n+2}y_{(i-1+k)}
    \label{prod de y dans B(p-1)}
\end{equation} 
The equations (\ref{singularite p-1}) and (\ref{prod de y dans B(p-1)}) and commutativity in $ B_{p}(\sigma)$ give
\begin{align}
x_{i}x_{i-1} &= \prod_{k=1}^{p}x_{(i+1-k)}.\prod_{k=1}^{n+p+1}y_{(i-p+k)}.
\label{relat de recurr dans B(sig)}
\end{align} 

$(2).$ Or $\prod_{k=1}^{p-1}x_{[i+1-k]} $ passed by the path $ (x_{1}^{n+1,p-1}.x_{n+1}^{n+1,p-1}) $. So, $ \prod_{k=1}^{p-1}y_{[i+1-p+k]}$ and $ \prod_{k=1}^{n-1}y_{[i+k]} $ contained each other $ (y_{n+1}^{n+1,p-1}.y_{1}^{n+1,p-1})$. Then
\begin{align}
\prod_{k=1}^{p-1}x_{[i+1-k]}.\prod_{k=1}^{p-1}y_{[i+1-p+k]} &= x_{i}x_{i-1} \cdots x_{2}[(y_{2}x_{2})^{p-i}]y_{2} \cdots y_{i}\\
                &= \prod_{k=1}^{i-1}x_{(i+1-k)}(y_{2}x_{2})^{p-i}\prod_{k=p-i+1}^{p-1}y_{(i+1-p+k)} 
\label{ppppp}                
\end{align}
The equations (\ref{ppppp}) and (\ref{prod de yny1 dans Pn+1,p-1}) and commutativity give the relation (\ref{relat de recurr dans B(sig)}). Finally, 
if $ i\in\lbrace n+1, n+2, n+3, 1, 2 \rbrace $: 

$(1)$ For $ i = 2 $, we have $ x_{2}x_{1} = (x_{2}.x_{1}^{n+1,p-1}).x_{1}^{4,1}
$ in $B_{p-1}(\sigma'')$ and $B_{1}(\sigma')$. The product $(x_{2}.x_{1}^{n+1,p-1})$ verified (\ref{singularite p-1}). But, in $B_{p}(\sigma)$, $\prod_{k=1}^{n-1}y_{[2+k]}.x_{1}^{4,1}=x_{2}.\prod_{k=1}^{n+2}y_{(1+k)} $. Then, (\ref{relat de recurr dans B(sig)}) is satisfied.

$(2)$ For $ i = n+1 $, $ x_{n+1}x_{n} = x_{n+1}^{4,1}.(x_{n+1}^{n+1,p-1}x_{n})$ in $B_{p}(\sigma)$. The product $(x_{n+1}^{n+1,p-1}x_{n})$ verified (\ref{singularite p-1}). But, $ x_{n+1}^{4,1}.\prod_{k=1}^{p-1}x_{[n+2-k]} = \prod_{k=1}^{p-1}x_{(n+2-k)} $. Then (\ref{relat de recurr dans B(sig)}) is satisfied.
We also verified the others situations. The case $i=n+3$ is trivial, the case $i=1$ is symmetric to $i=n+1$ and the case $i=2$ to $i=n+2$.
\end{proof}
\subsection{Examples of annulus A(n, m) and torus T(1, 0)}
 \def\dar[#1]{\ar@<2pt>[#1]\ar@<-2pt>[#1]}
  \entrymodifiers={!!<0pt,0.7ex>+}
For $ (S, M) = A(1, 1)$ without puncture, the Jacobian algebra $\Gamma_{\sigma}$ is defined by the quiver
\begin{align*}
     \xymatrix{ && 1 \dar[dd]  \\
Q_{\sigma}: &\bullet \ar@(ul,dl)_{y} \ar[ur]  & & \bullet \ar@(ur,dr)^{\overline{y}} \ar[ul] \\
&& 1' \ar[ur] \ar[ul]
}
\end{align*}
and its boundary algebra $B_{(1,1)}(\sigma)$ by the quiver
\begin{align*}
K(1,1)=   \xymatrix{ \bullet \ar@(ul,ur)^{x}\ar@(dl,dr)_{y} \ar@/^1pc/[rr]^{r} & & \bullet \ar@(ul,ur)^{\overline{x}}\ar@(dl,dr)_{\overline{y}}\ar@/^1pc/[ll]^{t}, 
}  
\end{align*}
with relations
\begin{align*}
    xy=yx, \; xyr=r\overline{x}\overline{y}, \; r= yr\overline{y}, \; x^{2} = r\overline{y^{2}}t \\
    \overline{x}\overline{y}=\overline{y}\overline{x}, \; \overline{x}\overline{y}t=txy, \; t= \overline{y}ty, \; \overline{x^{2}} = ty^{2}r
\end{align*}
In general case $ (S, M) = A(m, n)$ without puncture, $ B_{(n,m)}(\sigma) $ is given by the quiver
\begin{align*}
   \xymatrix{
    K[[x,y]]\ast \ZZ_{n} \ar@/^1pc/[rr]^{r_{1,1}} \ar@/^2pc/[rr]^{r_{2,2}} \ar@/^5pc/[rr]^{r_{n,m}}_{\vdots} & &  K[[\overline{x},\overline{y}]]\ast \ZZ_{m}  \ar@/^1pc/[ll]^{t_{1,1}}\ar@/^2pc/[ll]^{t_{2,2}} \ar@<3pt>@/^5pc/[ll]^{t_{n,m}}_{\vdots}
    }
\end{align*}

For $(S,M)= T(1, 0)$ without puncture,
\begin{align*}
\xymatrix{ && \bullet \ar[dd]_{e} \ar@/^.5pc/[drr]^{b} \\ 
\Gamma_{\sigma} = & \bullet \ar@(ul,dl)_{y} \ar[ur]^{a}  & & & \bullet \ar@(ul,ur)^{g}\ar@(dl,dr)_{c} \ar@/^.5pc/[ull]^{d} \ar@/^.5pc/[dll]^{h}\\
&& \bullet \ar@/^.5pc/[urr]^{f} \ar[ul]^{i}
}
\end{align*}
with the relations 
\begin{align*}
    &ei = bdcefghiy &\mbox{$(a)$}\\
    &gd = cdefghiya \;\;& \mbox{$(b)$}\\ 
    &\vdots \;\;\;\;\; \vdots\; \;\;\;\;\;\vdots & \vdots\\
    &ae = yabcdefgh \;\;& \mbox{$(i)$}
\end{align*}

\newpage


\bibliographystyle{plain}
\bibliography{reference}
\end{document}